\declaretheorem{theorem}
\declaretheorem{corollary}
\newcommand\norm[1]{\left\lVert#1\right\rVert}
\newcommand{\spn}{\operatorname{span}}
\renewcommand{\mod}{\operatorname{mod}} 
\newcommand{\supp}{\operatorname{supp}} 
\newcommand{\vect}{\operatorname{vec}}
\newcommand{\tr}{\operatorname{tr}}
\newcommand{\sign}{\operatorname{sgn}}
\renewcommand{\l}{\ell}
\newtheorem{proposition}[theorem]{Proposition}
\newcommand{\footremember}[2]{%
\footnote{#2}
\newcounter{#1}
\setcounter{#1}{\value{footnote}}%
}
\newcommand{\footrecall}[1]{%
\footnotemark[\value{#1}]%
}
\title{\large\bfseries Well conditioned ptychograpic imaging via lost subspace completion}
\date{\today}
\author{Anton Forstner\footremember{TUM}{Department of Mathematics, Technical University of Munich, 85748 Garching bei M\"unchen, Germany (\href{mailto:anton.forstner@tum.de}{anton.forstner@tum.de}, \href{mailto:felix.krahmer@tum.de}{felix.krahmer@tum.de}, \href{mailto:oleh.melnyk@tum.de}{oleh.melnyk@tum.de}, \href{mailto:sissouno@ma.tum.de}{sissouno@ma.tum.de})} \and Felix Krahmer\footrecall{TUM} \and Oleh Melnyk\footrecall{TUM} \footremember{HMGU}{Mathematical Imaging and Data Analysis, ICT, Helmholtz Center Munich, 85764 Neuherberg, Germany} \and Nada Sissouno\footrecall{TUM} \footrecall{HMGU} }
\begin{document}

\maketitle
\vspace{-3mm}

\begin{abstract}
Ptychography, a special case of the phase retrieval problem, is a popular method in modern imaging. Its measurements are based on the shifts of a locally supported window function. In general, direct recovery of an object from such measurements is known to be an ill-posed problem. Although for some windows the conditioning can be controlled, for a number of important cases it is not possible, for instance for Gaussian windows. In this paper we develop a subspace completion algorithm, which enables stable reconstruction for a much wider choice of windows, including Gaussian windows. The combination with a regularization technique leads to improved conditioning and better noise robustness. \\
Keywords: phase retrieval, ptychography, regularization
\end{abstract}
\vspace{4mm}


\section{Introduction}\label{introduction}

The aim in the phase retrieval problem is to recover a vector $x_0 \in \mathbb{C}^d$ from measurements given by
\begin{equation} \label{eq: ph ret}
y_j = \left| \langle x_0, a_j \rangle \right|^2 + n_j \text { or } y=\left| A x \right|^2 + n,
\end{equation}
where the measurement vectors $a_j$ (or the measurement matrix $A$, respectively) is known and the noise $n$ is unknown, but small. 
Note that no recovery method can distinguish the solution candidates $\{ e^{i \theta} \cdot x_0 : ~ \theta \in [0, 2 \pi] \}$, since they all generate the same set of measurements, so at best, reconstruction is possible up to a global phase factor. To account for this ambiguity, one typically measures the quality of reconstruction via   
\[
d(x,x_0) = \min_{\theta \in [0, 2 \pi]} \norm{x- e^{i \theta} x_0}_2.
\]
Phase retrieval is a central problem in various applied fields such as optical imaging \cite{Shechtman.2015}, crystallography \cite{Liu.2012} or imaging of noncrystalline materials \cite{Miao.2008}. In these fields, the aim is to draw conclusions about an object of interest based on diffraction measurements made from illuminating the object with some sort of radiation such as light waves or X-ray beams. These measurements, however, only measure the intensity of the optical wave that is reaching a detector and are not capable to measure any phase information \cite{Shechtman.2015}, although it encodes important structural information \cite{Candes.2013}. Thus, some restrictive model assumption or measurement redundancy is crucially required for recovery.

\subsection{Ptychography and locally supported measurements}
Ptychography is a specific form of a redundant phase retrieval problem that received considerable attention in recent years. Instead of a full illumination of the specimen $x_0$,  multiple measurements are obtained where in each of them the X-ray beam is focused on a small region of the object. For each focus region the intensities of the resulting far field diffraction patterns are captured by a detector. In the experimental setup, this is realized by moving the object after each measurement. To obtain redundancies, the regions are chosen to be overlapping. The whole process is summarized in Figure \ref{fig:ptychography}.

 \begin{figure}[!t] 
        \center{\includegraphics[width=0.8\textwidth]
        {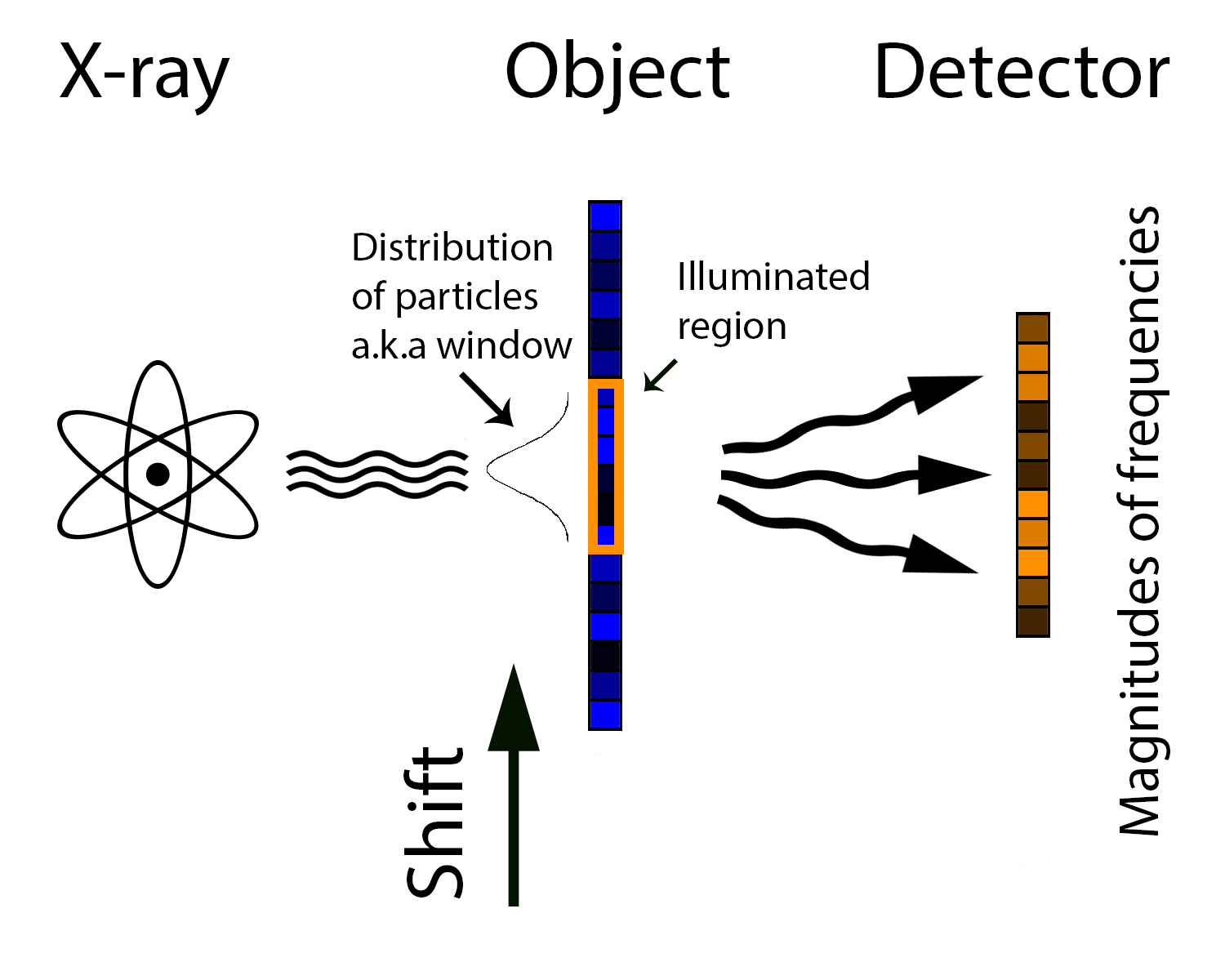}}
        \caption{A typical one-dimensional ptychographic setup}
        \label{fig:ptychography}
 \end{figure}

The history of ptychography dates back to works by Hoppe \cite{Hoppe.1969} from the late 60ths. In the following decades it was studied mainly in a few works by Rodenburg and coauthors \cite{Bates.1989,McCallum.1992,Rodenburg.1993}. Only in 2007 with the development of new imaging methods and devices, ptychography was rediscovered and became more popular, especially in the synchrotron community \cite{Holler.}.

Ptychography has applications in many different research areas across life and material sciences and has been successfully used to get image resolutions at a nanometer scale. For example it was used to image the internal structure of silk fibers \cite{Sakdinawat.2010}, the 3D pore structure of a catalyst \cite{daSilva.2015}, and to visualize stereocilia, hair cells in the inner ear, necessary for hearing and balancing that have a diameter in the nanometer range \cite{Piazza.2014}.
 
At the core of the measurement process is a detector that samples the intensities of the far field diffraction pattern  of the illuminated region. The measurements can be modeled by squared magnitudes of the Fourier coefficients of the windowed image, that is,
\begin{equation}
(y_\l)_j = \left| \displaystyle\sum_{n=1}^{d} w_n (x_0)_{n+\l} e^{- \frac{2 \pi i (j-1)(n-1)}{d}} \right|^2 + n_{j,\l} , \quad (j,\l) \in [d] \times P,
\label{def_ptychograpic_measurements}
\end{equation}
where $P \subset \{0,1,\ldots,d-1\}$ is the set of shifts of the object, $w \in \mathbb{C}^d$ is a localized illumination or window function and $n_{j,\l}$ is noise. 

\subsection{Overview of related work}

The general phase retrieval problem has been widely discussed along the time period of the last 50 years in the scientific community and a lot of research has been put in developing algorithms to tackle it efficiently. Earliest attempts are alternating projection algorithms developed in the 1970s by Gerchberg and Saxton \cite{R.W.GerchbergW.O.Saxton.1972} and Fienup \cite{Fienup.1978}. An overview and a numerical evaluation of such methods can be found in \cite{Marchesini.2007} and \cite{Marchesini.}. In general, algorithms based on alternating projection are popular among practitioners because they are easy to implement, often have low computational complexity, and produce reasonable solutions in many cases. Nevertheless, these iterative methods are mathematically difficult to analyze and their performance heavily relies on a good initial guess \cite{Marchesini.2019}. In particular, there are no global recovery guarantees for alternating projection approaches.\\

Following the emergence of compressed sensing in 2004 a number of works  aimed to analyze the phase retrieval problem from a similar viewpoint. This includes works deriving conditions for injectivity under generic measurements \cite{Balan.2006} and recovery guarantees for various algorithms under random measurement scenarios -- first for Gaussian random measurements with full randomness \cite{Candes.,Waldspurger.,Candes.2015,Alexeev.2014} and later for derandomized measurements such as subsampled spherical designs \cite{Gross.2015} and coded diffraction patterns \cite{Candes.2015,Bandeira.2014,Candes.2015b,Gross.2017}. The algorithms analyzed in these results, include convex optimization approaches such as \textit{PhaseLift} \cite{Candes.} -- these mainly show that recovery in polynomial time is possible and are not feasible for larger problem sizes -- and non-convex alternatives such as the \textit{Wirtinger Flow} algorithm \cite{Candes.2015}, which pursues a gradient descent strategy and, thus, is computationally more efficient. Up to this point, however, none of these approaches provides guarantees for ptychograpic measurements.\\

With rising interest in ptychography several methods were developed specifically for measurements of the form \eqref{def_ptychograpic_measurements}. An approach based on alternating projection methods known as PIE \cite{Faulkner.2004} is commonly used by practitioners, but again not understood mathematically. Alternatives that come with some mathematical guarantees include frame based approaches \cite{Pfander.2019}, numerical integration strategies \cite{Prusa.2017}, methods based on the properties of the Wigner distribution\cite{McCallum.1992, Chapman.1996,Perlmutter.2019}, and the so called \textit{BlockPR} algorithm \cite{Iwen.04.12.2016}.

The last mentioned method, which is also the starting point for our paper, uses deterministic measurement masks and a lifting scheme similar to \textit{PhaseLift} \cite{Candes.}. However, it is much more efficient as it exploits the locality of the measurement masks to reduce the dimension of the problem from $d^2$ in \textit{PhaseLift} to $d(2\delta -1)$ in \textit{BlockPR}. In its initial version \cite{Iwen.2016} \textit{BlockPR} was based on a greedy algorithm for recovery. Later, it was significantly improved by estimating the phases via angular synchronization \cite{Iwen.04.12.2016, Preskitt.2018} and extended to subsampled scenarios \cite{Melnyk.7820197122019, Preskitt.}. All of these works focus on rather restrictive classes of measurement windows; as we will see this is due to inherent invertibility and conditioning problems. In this paper, we address this issue proposing a strategy that allows for recovery even in the case of singularities.

\section{Preliminaries}\label{sec: preliminaries}

\subsection{Notation and Fourier transform basics}

Index sets will be expressed by the notation 
\begin{align*}
[n]_k := \left\lbrace k, \ldots, k+n-1 \right\rbrace \subset \mathbb{N}^0, \quad k \in \mathbb{N}^0,
\end{align*}
where we set ${[n] := [n]_1}$. The Hadamard product $u \circ v\in \mathbb{C}^d$  of two vectors $u,v\in \mathbb{C}^d$ is given entrywise by $(u\circ v)_j =u_j v_j$. 
Recall that for a complex vector $u \in \mathbb{C}^d$  its discrete Fourier transform (DFT) $\widehat{u} \in \mathbb{C}^d$ is given by
\begin{equation} \label{def_discrete_fourier_trans}
\widehat{u}_k = \displaystyle\sum_{n=1}^{d} u_n e^{\frac{-2 \pi i (n-1)(k-1)}{d}}.
\end{equation}

We define the discrete circular convolution of two vectors $u$ and $v$  according to \cite{Hunt.1971} as a vector $u * v$ with
\begin{align*}
(u * v)_\l = \displaystyle\sum_{k=1}^d u_{\l-k+1} v_k, \ \l \in [d].
\end{align*}
The (discrete version of the) Convolution Theorem states that the Fourier transform of the Hadamard product of two signals $u$ and $v$ in $\mathbb{C}^d$ is the (circular) convolution of the Fourier transforms of the two signals, i.e., 
\begin{equation} \label{convolution_theorem}
\widehat{u \circ v} = \widehat{u} * \widehat{v}.
\end{equation} 
A proof of this result can be found for example in \cite{Hunt.1971}. 



The (discrete) circular shift operator $S_\l: \mathbb{C}^d \rightarrow \mathbb{C}^d$, $\l \in \mathbb Z$ and the modulation operator $W_k: \mathbb{C}^d \rightarrow \mathbb{C}^d$, $k \in \mathbb Z$ are given by 
\begin{equation}\label{def_shift_operator}
\left( S_\l u\right)_j := u_{\l+j}, \quad u \in \mathbb{C}^d, \ j \in [d] 
\end{equation}
and
\begin{equation}
(W_k u)_j := e^{\frac{2 \pi i (k-1) (j-1)}{d}} u_j, \quad u \in \mathbb{C}^d, \ j \in [d].
\label{def_modulation}
\end{equation}

The next Lemma summarizes some well-known basic properties of the DFT, which are relevant for this paper. 

\begin{restatable}[Elementary Fourier transform properties]{lemma}{eftp}
\label{lem:EFTP}
Let the modulation operator $W_k$ be defined as in \eqref{def_modulation}, the DFT be defined as in \eqref{def_discrete_fourier_trans} and the Shift operator $S_\l$ be defined as in \eqref{def_shift_operator}. Then, the following properties hold for every complex vector $x \in \mathbb{C}^d$:
\begin{enumerate}
\item $\widehat{S_\l x} = W_{\l+1} \widehat{x}$ \label{p1},
\item $ \left( \widehat{\bar{x}} \right)_{k} =  \overline{ \left( \widehat{x} \right)}_{d-k+2}$ $\forall k \in [d]$. \label{conjug_in_time}
\end{enumerate}
\end{restatable}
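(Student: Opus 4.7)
The plan is to prove both identities by directly expanding the definitions of the DFT, the shift, and the modulation, and then manipulating the exponential terms. Both are classical and the work is essentially a bookkeeping exercise on indices; the only mild subtlety is that the shift in \eqref{def_shift_operator} must be read modulo $d$, so that $S_\l$ acts on $\mathbb{C}^d$ interpreted as a cyclic group, and similarly $d-k+2$ in property~\ref{conjug_in_time} must be interpreted modulo $d$ for the boundary case $k=1$.

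For property~\ref{p1}, I would start from
\[
(\widehat{S_\l x})_k = \sum_{n=1}^{d} x_{n+\l}\, e^{-2\pi i (n-1)(k-1)/d},
\]
and substitute $m = n+\l$, where $m$ is understood modulo $d$ so that it again ranges over $[d]$. Using the $d$-periodicity of $n \mapsto e^{-2\pi i (n-1)(k-1)/d}$ in $n$, the sum becomes
\[
\sum_{m=1}^{d} x_m\, e^{-2\pi i (m-\l-1)(k-1)/d} = e^{2\pi i \l(k-1)/d}\,\widehat{x}_k.
\]
By \eqref{def_modulation} we recognize $e^{2\pi i \l(k-1)/d}\widehat{x}_k = (W_{\l+1}\widehat{x})_k$, which finishes the first claim.

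For property~\ref{conjug_in_time}, the starting point is again the definition:
\[
(\widehat{\bar x})_k = \sum_{n=1}^{d} \bar{x}_n\, e^{-2\pi i (n-1)(k-1)/d} = \overline{\sum_{n=1}^{d} x_n\, e^{2\pi i (n-1)(k-1)/d}}.
\]
The task is then to show that the sum inside the conjugate equals $(\widehat{x})_{d-k+2}$. Writing out the latter and using that $-(d-k+1) \equiv k-1 \pmod d$, i.e.
\[
e^{-2\pi i (n-1)(d-k+1)/d} = e^{-2\pi i (n-1)}\, e^{2\pi i (n-1)(k-1)/d} = e^{2\pi i (n-1)(k-1)/d}
\]
since $e^{-2\pi i (n-1)}=1$ for integer $n$, yields the desired equality. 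The edge case $k=1$ (where formally $d-k+2 = d+1$) is handled by interpreting the DFT index cyclically, or simply by checking directly that both sides reduce to $\overline{\sum_n x_n}$.

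The main obstacle — if one can call it that — is being explicit about the cyclic indexing conventions so that $m = n+\l$ and $d-k+2$ are legitimate elements of $[d]$; once that is fixed, each part is a one-line exponential manipulation and there are no further analytic difficulties.
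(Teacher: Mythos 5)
Your proof is correct: both computations go through, and you rightly flag the only real subtlety, namely that the shift index $m=n+\l$ and the frequency index $d-k+2$ must be read cyclically (so that $k=1$ maps to index $d+1\equiv 1$). The paper itself gives no proof of this lemma, treating the properties as well-known DFT facts, so there is nothing to compare against; your direct expansion of the definitions is exactly the standard argument one would supply, and it matches the conventions \eqref{def_discrete_fourier_trans}, \eqref{def_shift_operator}, and \eqref{def_modulation} used later in the paper (in particular in the proof of Theorem~\ref{theo:linear system}, where property~\ref{conjug_in_time} is applied).
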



\subsection{Model}

One diffraction measurement $(y_\l)_j$ in \eqref{def_ptychograpic_measurements} corresponds to the j-th Fourier mode of the illuminated specimen shifted by $\ell$. The window $w$ that describes the illumination is assumed to have compact support 
\begin{equation*}
\supp(w) = [\delta] \subset [d],
\end{equation*}
where $\delta$ denotes the support size.
By combining the window with the modulation in \eqref{def_ptychograpic_measurements} we obtain masks $m_j$, $j \in [d]$, given by
\begin{equation} \label{eq:pty masks}
(m_j)_n := \overline{w_n} e^{\frac{2 \pi i (j-1)(n-1)}{d}}.
\end{equation} 
With this definition the squared magnitude measurements are of the form 
\begin{equation} \label{def_y}
(y_\l)_j = \left| \langle S_\l x_0, m_j \rangle \right|^2 + n_{j,\l} = \left| \langle x_0, S_\l^* m_j \rangle \right|^2 + n_{j,\l}, \quad (j,\l) \in [K] \times P,
\end{equation} 
with $K = d$ and $m_j$ given by \eqref{eq:pty masks}. Further, we consider another set of masks, obtained by subsampling in frequency domain, i.e.,
\begin{equation} \label{def_general_mask}
  (m_j)_n =
    \begin{cases}
      \frac{1}{\sqrt[4]{2\delta-1}} \overline{w_n} \cdot e^{\frac{2 \pi i (n-1) (j-1)}{2\delta -1}} & \text{if $n \leq \delta$,}\\
      0 & \text{if $n > \delta$,}
    \end{cases},\quad
j \in [2\delta -1].
\end{equation}
In particular we are interested in a Gaussian window given by the formula
\begin{equation}\label{def_gaussian_window}
w_n = \exp \left\lbrace -\frac{\left( n-\frac{\delta+1}{2} \right)^2}{2 \sigma^2 \delta^2} \right\rbrace, \quad n \in [\delta],
\end{equation}
which is a good approximation of windows appearing in ptychography \cite{Sissouno.2019}.

\subsection{Idea of the \textit{BlockPR} algorithm}

The eigenvector-based angular synchronization \textit{BlockPR} algorithm proposed by \cite{Iwen.04.12.2016} and \cite{Preskitt.2018} uses a linear measurement operator to describe how to retrieve the measurements \eqref{def_y} from the signal $x_0$. In order to introduce this linear operator, we first observe that in the noiseless case the quadratic measurements can be lifted up and interpreted as linear measurements of the rank one matrix $x_0 x_0^*$ as done in \cite{Candes.2013}. Indeed,
\begin{equation*}
\begin{split}
(y_\l)_j 
&= \left| \langle x_0 , S_\l^* m_j \rangle \right|^2 
= \langle x_0 , S_\l^* m_j \rangle \cdot \langle S_\l^* m_j, x_0 \rangle 
= m_j^* S_\l x_0 x_0^* S_\l^* m_j  \\
&
= \tr(S_\l^* m_j m_j^* S_\l x_0 x_0^*) =  \langle x_0 x_0^*, S_\l^* m_j m_j^* S_\l \rangle_F ,
\end{split}
\end{equation*}
where $\langle \cdot , \cdot \rangle_F$ denotes the Frobenius inner product defined as $\langle A, B \rangle_F := \tr(A^* B).$ By this reformulation, the phase retrieval problem is lifted to a linear problem on the space of Hermitian $d \times d$ matrices $\mathcal{H}^d$. We consider all shifts $\l \in P = [d]_0$ and recall that the masks are compactly supported. Thus, we observe that for every matrix $A \in \spn\left\lbrace S_\l^* m_j m_j^* S_\l \right\rbrace_{j\in [K], \l \in [d]_0}$ we have $A_{k,j} = 0$ when $\left| k - j \right| < \delta$ or $|k-j| > d - \delta$. Based on this observation we introduce a family of orthogonal projection operators ${T_\delta: \mathcal{H}^d \rightarrow \mathbb{C}^{d \times d}}$, given by 
\begin{equation*}
  \left( T_\delta(A) \right)_{k,j} =
    \begin{cases}
      A_{k,j} & \text{if} \left| k - j \right| < \delta \text{ or } |k-j| > d - \delta,\\
      0, & \text{else.}\\
    \end{cases}       
\end{equation*}
We see that $T_\delta$ is the orthogonal projection operator from the space $\mathcal{H}^d$ onto range $T_\delta ( \mathcal{H}^d ) \supseteq \spn\left\lbrace S_\l^* m_j m_j^* S_\l \right\rbrace_{ j\in [K], \l \in [d]_0} $. Consequently, we can write
$$
\langle x_0 x_0^*, S_\l^* m_j m_j^* S_\l \rangle = \langle T_{\delta} (x_0 x_0^*), S_\l^* m_j m_j^* S_\l \rangle.
$$

Finally we define the linear operator $\mathcal{A}: \mathcal{H}^d \rightarrow \mathbb{C}^{D}$, $D:=K \cdot d$, describing the measurement process as
\begin{equation} \label{def_operator_Alpha}
\mathcal{A}(X) = \left[ \langle X, S_\l^* m_j m_j^* S_\l \rangle \right]_{(\l,j)}.
\end{equation}
We will denote the restriction of the operator $\mathcal{A}$ to the domain $T_{\delta}(\mathcal{H}^d)$ by $\mathcal{A}|_{T_{\delta}(\mathcal{H}^d)}$.

The operator $\mathcal{A}|_{T_{\delta}(\mathcal{H}^d)}$ allows to reformulate \eqref{def_y} in the absence of noise as
\begin{equation*} \label{linear_system_to_get_X_0}
(y_\l)_j = \left| \langle x_0 , S_\l^* m_j \rangle \right|^2 = \langle T_{\delta}(x_0 x_0^*), S_\l^* m_j m_j^* S_\l \rangle = \left(\mathcal{A}|_{T_{\delta}(\mathcal{H}^d)} T_{\delta}\left(x_0 x_0^* \right)\right)_{(\l,j)},
\end{equation*}
which we wish to invert in order to obtain the lifted and on $T_{\delta}$ projected rank 1 object
\begin{equation*}
X_0 := T_{\delta}(x_0 x_0^*).
\label{def_X0}
\end{equation*}
From this matrix we form a banded matrix $\tilde X_0$ by entrywise normalization of non-zero entries of $X_0$, i. e.,
\begin{equation}\label{norm_X}
(\tilde X_0)_{k,j} =
\begin{cases}
\frac{(X_0)_{k,j}}{|(X_0)_{k,j}|}, & (\tilde X_0)_{k,j} \neq 0, \\
0, & \text{otherwise.}
\end{cases}
\end{equation}

The magnitudes of the entries of the signal $x_0$ can be recovered as square roots of the diagonal elements of $X_0$ as in \cite{Iwen.2016}. The phases of $x_0$ can be obtained from the entrywise normalization of the top eigenvector of matrix $\tilde X_0$ as a result of solving the angular synchronization problem. The whole reconstruction of $x_0$ as proposed in \cite{Iwen.04.12.2016} is summarized in Algorithm \ref{algo:1}.\par

\medskip
\begin{algorithm}[H] 
\caption{Fast Phase Retrieval from Local Measurements}
\SetAlgoLined
\SetKwInOut{Input}{Input}
\SetKwInOut{Output}{Output}
\Input{Measurements $y \in \mathbb{R}^D$ as in \eqref{def_y}}
\Output{$x \in \mathbb{C}^d$ with $x \approx e^{-i\theta} x_0$ for some $\theta \in [0, 2\pi]$ }
1. Compute $X = \left[ \mathcal{A}|_{T_{\delta}(\mathcal{H}^d)} \right]^{-1} y \in T_{\delta}(\mathcal{H}^d)$ as an Hermitian estimate of $T_\delta(x_0 x_0^*)$.\\
2. Form the banded matrix of phases $\tilde{X} \in T_{\delta}(\mathcal{H}^d)$ defined in \eqref{norm_X}. \\
3. Compute the normalized top eigenvector of $\tilde{X}$, denoted by $\tilde{x} \in \mathbb{C}^d$ with $\norm{\tilde{x}}_2 = \sqrt{d}$.\\
4. Set $x_j = \sqrt{X_{j,j}} \cdot (\tilde{x})_j$ for all $j \in [d]$ to form $x \in \mathbb{C}^d$.
\label{algo:1}
\end{algorithm}
\medskip

When the measurements are noisy, this approach needs to be refined for stable approximate recovery. This gives rise to Block Magnitude estimation \cite{Preskitt., Iwen.04.12.2016, Preskitt.2018}, which we also use as a building block of our implementation to obtain the magnitudes.
This technique utilizes the banded structure of $X_0$ by decomposing it into smaller fixed size block matrices from which then separate magnitude estimates are made. The actual estimation is realized via calculating and averaging the top eigenvector of each single block matrix which serves as a guess of the underlying signal magnitude. 

Note that invertibility and well-conditioning of the operator $\mathcal{A}|_{T_{\delta}(\mathcal{H}^d)}$ are crucial for a proper recovery by Algorithm \ref{algo:1}. In fact, invertibility of the operator $\mathcal{A}|_{T_{\delta}(\mathcal{H}^d)}$ strongly depends on the selection of the measurement masks $m_j, j \in [K]$, which determines if the condition $T_\delta ( \mathcal{H}^d ) = \spn\left\lbrace S_\l^* m_j m_j^* S_\l \right\rbrace_{ j\in [K], \l \in [d]_0} $ holds, which is equivalent to invertibility of $\mathcal{A}|_{T_{\delta}(\mathcal{H}^d)}$.
In \cite{Iwen.2016} measurement masks 
\begin{equation}
  (m_j)_n =
    \begin{cases}
      \frac{1}{\sqrt[4]{2\delta-1}} \; e^{-n/\alpha} \cdot e^{\frac{2 \pi i (n-1) (j-1)}{2\delta -1}} & \text{if $n \leq \delta$,}\\
      0 & \text{if $n > \delta$,}\\
    \end{cases}
    \label{measurement_masks_paper}   
\end{equation}
where $\alpha \in [4, \infty)$ and $m_j \in \mathbb{C}^d, j=1,...,2\delta-1$, are proven to be a good choice. They resemble the structure of ptychographic masks \eqref{eq:pty masks} and at the same time allow for a stable inversion as proven in Lemma 2 of \cite{Iwen.2016}. 

\section{Results} \label{sec: results}

%

Under the model \eqref{measurement_masks_paper}, it is shown in Lemma 2 of \cite{Iwen.2016} that the operator $\mathcal{A}|_{T_{\delta}(\mathcal{H}^d)}$ is invertible. However, it is also shown that its minimal singular value behaves proportional to $\delta^{-1}$ which leads to severe conditioning problems as the support size increases. In addition, this analysis is specific to masks of the form \eqref{measurement_masks_paper}. First steps towards a more general choice of windows were taken in \cite{Preskitt.2018, Preskitt.}, but for many windows including Gaussian windows \eqref{def_gaussian_window}, the reconstruction remained an open problem. 
The difficulty is that for many windows, the measurement operator $\mathcal{A}|_{T_{\delta}(\mathcal{H}^d)}$ is not even invertible. An example is the following statement.



\begin{corollary}\label{theo:operator non invertible}
Consider masks of the form \eqref{def_general_mask}. Assume that $d$ is even and the window $w$ satisfies the symmetry condition
\begin{equation*}
w_n = \overline{w}_{\delta - n +1} \text{ for all } n \in [\delta].
\end{equation*}
Then, $\mathcal{A}|_{T_{\delta}(\mathcal{H}^d)}$ is not invertible.
\end{corollary}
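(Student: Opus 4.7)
The plan is to construct an explicit nonzero matrix $X \in T_\delta(\mathcal{H}^d)$ that is annihilated by $\mathcal{A}$, by decoupling the measurements across the super-diagonals of $X$ and then exploiting a palindromic symmetry in a window-dependent Fourier kernel.

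First, I would substitute \eqref{def_general_mask} into the lifted form $(y_\ell)_j = \langle X, S_\ell^* m_j m_j^* S_\ell \rangle$ to obtain
\[
(y_\ell)_j = \frac{1}{\sqrt{2\delta - 1}} \sum_{s = -(\delta-1)}^{\delta - 1} e^{\frac{2\pi i s(j-1)}{2\delta - 1}}\, h_\ell(s), \quad h_\ell(s) := \sum_{b,\, b+s \in [\delta]} X_{\ell+b,\, \ell+b+s}\, \overline{w_{b+s}}\, w_b,
\]
with indices taken modulo $d$. Since this is a DFT of length $2\delta - 1$ in $j$, the condition $(y_\ell)_j = 0$ for every $j \in [2\delta - 1]$ is equivalent to $h_\ell(s) = 0$ for every $s \in \{-(\delta-1), \ldots, \delta-1\}$, so $\ker \mathcal{A}|_{T_\delta(\mathcal{H}^d)}$ decouples into a separate constraint on each circular super-diagonal $v^{(s)}_k := X_{k, k+s}$ of $X$. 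Writing $h_\ell(s)$ as a circular cross-correlation of $v^{(s)}$ against the kernel $r^{(s)}_b := \overline{w_{b+s}}\, w_b$ (supported on $b \in [\delta - s]$ for $s \geq 0$) and applying the DFT in $\ell$ yields $\widehat{h(s)}_k = \widehat{r^{(s)}}_{d-k+2}\, \widehat{v^{(s)}}_k$ (indices modulo $d$); thus $h(s) \equiv 0$ iff $\widehat{v^{(s)}}$ is supported on $\{k : \widehat{r^{(s)}}_{d-k+2} = 0\}$, and finding a nontrivial kernel element on the $s$-th diagonal reduces to locating a single zero of $\widehat{r^{(s)}}$.

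Next, I would use the symmetry hypothesis to force such a zero at the Nyquist index $k = d/2 + 1$. Plugging $w_n = \overline{w}_{\delta - n + 1}$ into $r^{(s)}$ and reindexing the two conjugations gives the palindromic identity $r^{(s)}_b = r^{(s)}_{\delta - s + 1 - b}$ on its support of length $\delta - s$. Substituting this reflection inside the DFT sum defining $\widehat{r^{(s)}}_k$ yields the functional equation $\widehat{r^{(s)}}_k = e^{-\frac{2\pi i (\delta - s - 1)(k-1)}{d}}\, \widehat{r^{(s)}}_{d-k+2}$ (indices modulo $d$). The hypothesis that $d$ is even makes $k = d/2 + 1$ a fixed point of $k \mapsto d - k + 2$, so the equation collapses at that index to $\widehat{r^{(s)}}_{d/2+1} = (-1)^{\delta - s - 1}\, \widehat{r^{(s)}}_{d/2+1}$, forcing $\widehat{r^{(s)}}_{d/2+1} = 0$ whenever $\delta - s$ is even. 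Choosing $s = 0$ if $\delta$ is even and $s = 1$ if $\delta$ is odd (the only nontrivial regime requires $\delta \geq 2$), I would set $v^{(s)}_j := (-1)^{j-1}$, extend $X$ by Hermitian conjugation to the $(-s)$-th diagonal and zero on every other diagonal, and verify that the resulting $X \in T_\delta(\mathcal{H}^d) \setminus \{0\}$ lies in $\ker \mathcal{A}$.

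I expect the symmetry step to be the main obstacle: correctly tracking the linear phase factor together with the two conjugations embedded in $r^{(s)}$, and confirming that $d$ being even is precisely the hypothesis needed to create a nontrivial fixed point of the reflection $k \mapsto d - k + 2$. The decoupling step is largely bookkeeping, based on the observation already underlying \cite{Iwen.2016,Preskitt.2018} that for each shift the $2\delta - 1$ masks act as a DFT matrix in $j$, followed by a routine Fourier inversion of convolution-type sums.
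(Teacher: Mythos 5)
Your argument is correct, and at its mathematical core it performs the same computation as the paper: the quantity $\widehat{r^{(s)}}_{d/2+1}=\sum_{b}w_b\overline{w_{b+s}}(-1)^{b-1}$ whose vanishing you derive is, up to complex conjugation, exactly the entry $z_{d/2+1,\,s+1}$ of Proposition \ref{theorem_0_eigenvalues}, and your functional equation at the Nyquist fixed point $k=d/2+1$ of $k\mapsto d-k+2$ is equivalent to the paper's trick of averaging the sum with its reflected reindexing to produce the factor $\tfrac{(-1)^{\ell}}{2}\left(1+(-1)^{\delta-j-2\ell}\right)$. The packaging, however, is genuinely different. The paper argues top-down: it imports from \cite{Iwen.2016} the block diagonalization $M=U_{2\delta-1}JU_{2\delta-1}^*$ with $J_k=F_{2\delta-1}\,\mathrm{diag}(z_{k,j})$, reads off that the $|z_{k,j}|$ are the singular values of $M$, and concludes non-invertibility from a single vanishing $z_{d/2+1,j}$; this buys the explicit SVD \eqref{def_svd_fourier_measurements} and the index set $\mathcal I$ of Corollary \ref{theo:missing entries}, which the subspace-completion algorithm needs later. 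You argue bottom-up and self-contained: you decouple the constraints diagonal by diagonal, reduce each to a circular correlation, and exhibit an explicit nonzero $X\in T_\delta(\mathcal H^d)$ (an alternating main or first super-diagonal, depending on the parity of $\delta$) lying in $\ker\mathcal A$. This is more elementary, needing no prior SVD machinery, and it produces a concrete witness of non-invertibility; the small points to tidy up are the harmless unimodular phase you drop in $\widehat{h(s)}_k=\widehat{r^{(s)}}_{d-k+2}\,\widehat{v^{(s)}}_k$, the remark that Hermitian symmetry gives $h_\ell(-s)=\overline{h_\ell(s)}$ so the $(-s)$-diagonal constraint is automatic, and the implicit restriction $\delta\ge 2$ (which the paper's statement also tacitly assumes, since for $\delta=1$ no zero singular values occur).
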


Corollary \ref{theo:operator non invertible} is equivalent to saying that $T_\delta ( \mathcal{H}^d ) \supsetneq \spn\left\lbrace S_\l^* m_j m_j^* S_\l \right\rbrace_{ j\in [K], \l \in [d]_0} $. That is for symmetric windows there exists a nontrivial ``lost subspace'' $\mathcal{L} \subset T_\delta ( \mathcal{H}^d )$ with $T_\delta ( \mathcal{H}^d ) = \spn\left\lbrace S_\l^* m_j m_j^* S_\l \right\rbrace_{ j\in [K], \l \in [d]_0} \oplus \mathcal{L}$. The main idea of this paper is that the redundancy in $X_0$ can be utilized to complete the information about the ``lost subspace'' $\mathcal{L}$. Moreover, this technique can be extended to the ``approximately lost subspace'' corresponding to the ill-conditioned part of the operator $\mathcal{A}|_{T_{\delta}(\mathcal{H}^d)}$ and, by doing so, better robustness to noise is achieved. The resulting method, the main contribution of our work, is summarized in Algorithm \ref{algo:2}. Its fundamental ideas are presented in the remainder of this section.



\subsection{Generalized inversion step}
As discussed in Section \ref{sec: preliminaries}, the measurement operator $\mathcal A$ is a linear function of $X_0$, the restriction of $x_0 x_0^*$ to the $2\delta -1$ significant diagonals. Consequently, as observed in \cite{Iwen.2016}, by reorganizing the entries of these diagonals as a vector, i.e.,
\begin{equation}
\vect(X_0)_j:=\overline{(x_0)}_{\lceil \frac{j+\delta-1}{2\delta-1}\rceil} (x_0)_{\lceil \frac{j+\delta-1}{2\delta-1}\rceil + ((j+\delta -2) \mod(2\delta-1))-\delta +1} \quad \forall j \in [D],
\label{def_vec_X0}
\end{equation}
 one can describe the measurement process as a matrix-vector product
\[
M \vect(X_0)=y
\] 
for an appropriate matrix $M\in \mathbb{C}^{D \times D}$. Note that $M$ is obtained by deleting the columns of the matrix form of $\mathcal{A}\big|_{T_\delta(\mathcal H^d)}$ corresponding to the entries in the kernel of the projection $T_{\delta}$. Thus, to understand the invertibility and stability of the operator $\mathcal{A}\big|_{T_\delta(\mathcal H^d)}$ it is sufficient to analyze the conditioning of the matrix $M$.


We split the singular value decomposition of this matrix $M$ into two parts corresponding to all singular values above some threshold $\varepsilon$ -- these will form the matrix $\Sigma_1$ -- and all singular values less or equal to $\varepsilon$  -- these will form the matrix $\Sigma_2$. The matrices $U$ and $V$ of the left and right singular vectors are split accordingly into $U_1, U_2$ and $V_1, V_2$, respectively. That is, we obtain
\begin{equation*} \label{eq:svd_of_operator}
M = U \Sigma V^*=\left( \begin{array}{rr} U_1 & U_2 \\ \end{array}\right) \left( \begin{array}{rr} \Sigma_1 & 0 \\ 0 & \Sigma_2 \end{array}\right) \left( \begin{array}{rr} V_1 & V_2 \\ \end{array}\right)^*.
\end{equation*}
The subspace $\mathcal{S}$ is defined as  
\[
\mathcal S := \spn\{ V^{(q)} : V^{(q)} \text{ column of } V_1 \}
\]

As it was shown in Corollary \ref{theo:operator non invertible}, the inverse of operator $\mathcal{A}|_{T_{\delta}(\mathcal{H}^d)}$ and hence of $M$ does not exist. To account for this and possible ill-conditioning we will work with a regularized inverse, which for $\varepsilon=0$ agrees with the Moore-Penrose pseudoinverse and which is defined as

\begin{equation}
M_{\mathcal{S}}^{-1} := \left( \begin{array}{rr} V_1 & V_2 \\ \end{array}\right) \left( \begin{array}{rr} \Sigma_1^{-1} & 0 \\ 0 & 0 \end{array}\right) \left( \begin{array}{rr} U_1 & U_2 \\ \end{array}\right)^*.
\label{svd_A|_S^{-1}}
\end{equation}

For $\varepsilon>0$ this thresholding yields a more noise robust version of the pseudoinverse operator. Note that in all cases one has
\begin{equation}\label{eq: pseudo projection}
M_{\mathcal{S}}^{-1} M = V_1 V_1^*.
\end{equation}
By constructions, this operation yields a matrix in $T_\delta(\mathcal H^d)$ in vectorized form. Combined with an embedding step that maps this vectorization back to matrix form, we obtain a regularized inverse operator, which we denote by $A\big|_{\mathcal{S}}^{-1}$.

The strategy of our approach will be to first reconstruct the well-conditioned portion of the signal $X_{\mathcal{S}} := \mathcal A\big|_{\mathcal{S}}^{-1} y $ and then use the underlying structure to recover $X_0$ from $X_\mathcal S$.  

\subsection{Subspace Completion Algorithm}
The matrix $X_0$ can be expressed through its diagonals $L^r$ , $r \in [2 \delta -1]$, which are defined componentwise as
\begin{equation}\label{def2_L_k}
\begin{split}
L^r_z =
    \begin{cases}
      \overline{(x_0)}_z (x_0)_{z+r-1} &\text{ if } 1\leq r \leq \delta, \\
      \overline{(x_0)}_{z+1} (x_0)_{z+1+r-2\delta} &\text{ if } \delta +1 \leq r \leq 2\delta -1, \\
    \end{cases}
 \end{split}
\end{equation}
where $z \in [d]$. 

As a matter of fact, each of the columns of $V$, that is, each right singular vector of $M$, is exclusively supported on a single diagonal. Even stronger, each Fourier coefficient of a diagonal $L^r$ can be computed using just one of the singular vectors, as given by the following theorem.

\begin{restatable}{theorem}{theodiag} \label{theo:diagonal_vec_relation}
Consider measurement masks of the form \eqref{def_general_mask}. Then,
\begin{equation} \label{eq:theo_diagonal_vec_relation}
\widehat{L^r}_{\xi} = \sqrt{d} \; \langle \vect(X_0), V^{(q)} \rangle,
\end{equation}
where $q$ is decomposed as 
\begin{equation*} \label{eq:indexing columns}
q = (2\delta -1)(\xi-1)+r
\end{equation*}
with $\xi \in [d]$ and $r \in [2 \delta -1]$ and $\widehat a$ denotes the discrete Fourier transform (DFT) of the vector $a \in \mathbb{C}^d$ as in \eqref{def_discrete_fourier_trans}.
\end{restatable}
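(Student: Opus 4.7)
The plan is to exploit the bi-Fourier structure of the measurement process: the modulation index $j$ is a frequency variable of size $2\delta-1$, while the shift $\ell \in [d]_0$ is a translation of size $d$. After applying a DFT of size $2\delta-1$ in $j$ and a DFT of size $d$ in $\ell$, I expect the matrix $M$ to diagonalize, with pure Fourier modes on each diagonal $L^r$ appearing as the right singular vectors. Once this block-diagonal structure is established, the claim \eqref{eq:theo_diagonal_vec_relation} will follow from a direct normalization check.

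First I would substitute the mask definition \eqref{def_general_mask} into \eqref{def_y} to obtain
\[
(y_\ell)_j = \frac{1}{\sqrt{2\delta-1}} \sum_{k,k'=1}^{\delta} w_k \overline{w_{k'}} \, (x_0)_{\ell+k} \overline{(x_0)_{\ell+k'}} \, e^{-\frac{2\pi i (k-k')(j-1)}{2\delta-1}}.
\]
Applying the inverse DFT of size $2\delta-1$ in $j$ then extracts from each measurement the contribution of a single diagonal: since $k-k' \in [-(\delta-1), \delta-1]$ and $2\delta-1 > 2(\delta-1)$, distinct values of $k-k'$ produce distinct residues modulo $2\delta-1$, so each diagonal index $r \in [2\delta-1]$ corresponds to a unique shift $k - k'$. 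The result is a discrete cross-correlation between $L^r$ and a window-product vector $g^r$, whose support has size $\delta - r + 1$ for $r \in [\delta]$ and $r - \delta$ for $r \in [\delta+1,2\delta-1]$. Taking a DFT of size $d$ in $\ell$ then converts this cross-correlation, by the discrete convolution theorem, into a pointwise product of $\widehat{L^r}_\xi$ with a Fourier coefficient of $g^r$.

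Combining these two DFTs yields an explicit factorization of $M$ (up to a reordering of rows and columns) as a diagonal operator sandwiched between two unitary Fourier bases, with diagonal entries $\sqrt{2\delta-1}$ times Fourier coefficients of the $g^r$. Reading off the SVD from this factorization, the right singular vectors are, for each diagonal $r$, the pure Fourier modes $V^{(q)}_{(r,z)} = \frac{1}{\sqrt{d}} e^{2\pi i (z-1)(\xi-1)/d}$ supported on the $d$-entry block of $\vect(X_0)$ corresponding to diagonal $r$. Plugging this into the inner product $\langle \vect(X_0), V^{(q)}\rangle$ and comparing with \eqref{def_discrete_fourier_trans} directly produces $\widehat{L^r}_\xi/\sqrt{d}$, which is the claimed identity.

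The hard part will not be conceptual but rather the index bookkeeping: matching the interleaved vectorization convention in \eqref{def_vec_X0} with the column ordering $q = (2\delta-1)(\xi-1)+r$, tracking the phase factors that arise from the off-by-one shifts between \eqref{def_discrete_fourier_trans} and the natural Fourier modes appearing in the calculation, and treating the two cases $r \in [\delta]$ and $r \in [\delta+1,2\delta-1]$ (upper versus lower diagonals) in a uniform way. In particular, one must be careful that the basis identified above is indeed an SVD basis and not merely an eigendecomposition of $M^*M$ before fixing the phase of each $V^{(q)}$.
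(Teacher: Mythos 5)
Your proposal is correct and follows essentially the same route as the paper: the key step in both is identifying the right singular vectors $V^{(q)}$ as the columns of the block Fourier matrix $U_{2\delta-1}$ (pure Fourier modes supported on a single diagonal), after which \eqref{eq:theo_diagonal_vec_relation} is a direct index computation matching $\vect(X_0)$ against $L^r$. The only difference is that you re-derive the bi-Fourier block diagonalization of $M$ from the mask definition, whereas the paper imports it from \cite{Iwen.2016} and only verifies the vectorization indexing (its Lemma~\ref{lem:indexX0}); your caveat about fixing phases so that $V=U_{2\delta-1}$ is exactly handled in the paper by absorbing $\sign(Z)$ into the left factor in \eqref{def_svd_fourier_measurements}.
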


Note that equality \eqref{eq:theo_diagonal_vec_relation} will not change if we replace $X_0$ by an orthogonal projection $X_\mathcal{S}$ onto a truncated singular space, as long as $V^{(q)} \in \mathcal S$.  
Consequently, one obtains

\begin{restatable}{corollary}{corLk}\label{cor:step2}
Under assumption of Theorem \ref{theo:diagonal_vec_relation} in the noiseless case where $X_\mathcal{S}= \mathcal{A}|_{\mathcal{S}}^{-1}y $ it holds that
\begin{equation*}
\sqrt{d} \; \langle  \vect(X_\mathcal{S}),V^{(q)} \rangle = 
\begin{cases}
 \widehat{L^r}_{\xi} & \text{ if } V^{(q)} \text{ is a column of } V_1, \\
0 & \text{ if } V^{(q)} \text{ is a column of } V_2.
\end{cases}
\end{equation*}
\end{restatable}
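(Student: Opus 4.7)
The plan is to reduce the corollary to a direct consequence of Theorem~\ref{theo:diagonal_vec_relation} combined with the pseudoinverse identity \cref{eq: pseudo projection}. In the noiseless case, the measurement vector satisfies $y = \mathcal{A}(X_0)$, which in vectorized form reads $y = M\vect(X_0)$. Applying the regularized inverse and using \cref{eq: pseudo projection} gives
\begin{equation*}
\vect(X_\mathcal{S}) \;=\; M_\mathcal{S}^{-1} y \;=\; M_\mathcal{S}^{-1} M \vect(X_0) \;=\; V_1 V_1^* \vect(X_0),
\end{equation*}
so $\vect(X_\mathcal{S})$ is precisely the orthogonal projection of $\vect(X_0)$ onto $\mathcal{S}$.

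From here I would simply test this against any right singular vector $V^{(q)}$. Since $V$ is unitary, its columns form an orthonormal basis; in particular, the columns of $V_1$ are orthonormal and span $\mathcal{S}$, while the columns of $V_2$ span $\mathcal{S}^{\perp}$. Therefore, writing $P_\mathcal{S} = V_1 V_1^*$ for the orthogonal projector onto $\mathcal{S}$, one has $P_\mathcal{S} V^{(q)} = V^{(q)}$ if $V^{(q)}$ is a column of $V_1$, and $P_\mathcal{S} V^{(q)} = 0$ if $V^{(q)}$ is a column of $V_2$. Using self-adjointness of $P_\mathcal{S}$,
\begin{equation*}
\sqrt{d}\,\langle \vect(X_\mathcal{S}), V^{(q)} \rangle \;=\; \sqrt{d}\,\langle P_\mathcal{S} \vect(X_0), V^{(q)} \rangle \;=\; \sqrt{d}\,\langle \vect(X_0), P_\mathcal{S} V^{(q)} \rangle,
\end{equation*}
which in the first case equals $\sqrt{d}\,\langle \vect(X_0), V^{(q)}\rangle = \widehat{L^r}_\xi$ by Theorem~\ref{theo:diagonal_vec_relation}, and in the second case equals $0$.

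There is no real obstacle here beyond bookkeeping: the only place where something nontrivial is invoked is the identification of $\vect(X_\mathcal{S})$ as an orthogonal projection of $\vect(X_0)$, which follows directly from \cref{eq: pseudo projection} together with the noiseless measurement equation $y = M\vect(X_0)$. I would, however, mention explicitly that the noiseless assumption is crucial (otherwise $M_\mathcal{S}^{-1} y$ would also contain a contribution from the noise projected onto $U_1$), and that the indexing $q = (2\delta-1)(\xi-1)+r$ is consistent between Theorem~\ref{theo:diagonal_vec_relation} and the singular value decomposition used to define $V_1, V_2$, so that the two cases of the corollary match the two singular subspaces.
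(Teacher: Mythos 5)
Your proposal is correct and follows essentially the same route as the paper's own proof: identify $\vect(X_\mathcal{S}) = V_1V_1^*\vect(X_0)$ via \cref{eq: pseudo projection} in the noiseless case, move the self-adjoint projector onto $V^{(q)}$ inside the inner product, and conclude by orthonormality of the columns of $V$ together with Theorem~\ref{theo:diagonal_vec_relation}. No gaps.
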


Thus, completion of the lost subspace information is equivalent to finding the missing Fourier coefficients of the diagonals $L^r$. For that, we exploit the redundancy in $X_0$ as a projection of a rank one matrix via the following lemma.  

\begin{restatable}{lemma}{lemmaquad}\label{lemma_represent_Ll_by_Lk}
Let $L^r$ and $L^{\l}$, $r,\l \in [\delta]$ be diagonals as defined in \eqref{def2_L_k}. Then, the following relation holds
\begin{equation} \label{assertion_1_lemma_represent_Ll_by_Lk}
L^r \circ S_{\l-1} (L^r)^* = L^{\l} \circ S_{r-1} (L^{\l})^*.
\end{equation}
\end{restatable}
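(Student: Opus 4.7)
The lemma reduces to an elementary identity that becomes transparent once both sides are unfolded into products of four entries of $x_0$ and the shift operator is interpreted cyclically.

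My plan is to compute the $z$-th entry of each side explicitly. Since $r, \ell \in [\delta]$, the definition \eqref{def2_L_k} of $L^r$ and $L^\ell$ always falls into the first branch, yielding $L^r_z = \overline{(x_0)}_z (x_0)_{z+r-1}$ and $L^\ell_z = \overline{(x_0)}_z (x_0)_{z+\ell-1}$ (all indices mod $d$, in line with the circular shift convention \eqref{def_shift_operator}). Applying the shift operator gives $\bigl(S_{\ell-1} L^r\bigr)_z = L^r_{z+\ell-1} = \overline{(x_0)}_{z+\ell-1} (x_0)_{z+\ell+r-2}$, and similarly $\bigl(S_{r-1} L^\ell\bigr)_z = \overline{(x_0)}_{z+r-1} (x_0)_{z+r+\ell-2}$.

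Taking the Hadamard product with the (unshifted) conjugate on each side, the LHS entry becomes
\begin{equation*}
\bigl(L^r \circ \overline{S_{\ell-1} L^r}\bigr)_z = \overline{(x_0)}_z\,(x_0)_{z+r-1}\,(x_0)_{z+\ell-1}\,\overline{(x_0)}_{z+\ell+r-2},
\end{equation*}
while the RHS entry becomes
\begin{equation*}
\bigl(L^\ell \circ \overline{S_{r-1} L^\ell}\bigr)_z = \overline{(x_0)}_z\,(x_0)_{z+\ell-1}\,(x_0)_{z+r-1}\,\overline{(x_0)}_{z+r+\ell-2}.
\end{equation*}
These two expressions coincide entry-by-entry by commutativity of scalar multiplication, which proves \eqref{assertion_1_lemma_represent_Ll_by_Lk}.

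There is no real obstacle beyond bookkeeping: the only care required is to make sure that the mixed indices $z+\ell-1$, $z+r-1$, and $z+r+\ell-2$ are consistently understood modulo $d$, so that the identity is valid on all of $[d]$ rather than only on a sub-range where no wrap-around occurs. Since the shift $S_{\l-1}$ in \eqref{def_shift_operator} is already defined cyclically, this is automatic and no additional argument is needed.
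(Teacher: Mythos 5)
Your proof is correct and is essentially identical to the paper's: both unfold the $z$-th entry of each side into the product $\overline{(x_0)}_z\,(x_0)_{z+r-1}\,(x_0)_{z+\ell-1}\,\overline{(x_0)}_{z+\ell+r-2}$ and conclude by commutativity of multiplication. Your added remark about cyclic index wrap-around is a harmless clarification the paper leaves implicit.
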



This allows us to summarize the lost subspace completion in Algorithm \ref{algo:2}.

\begin{algorithm}[!h]
\SetKwInOut{Input}{Input}
\SetKwInOut{Output}{Output}
\Input{Measurements $y \in \mathbb{R}^D$ as in \eqref{def_y}, truncation parameter $\varepsilon \ge 0$}
\Output{$x \in \mathbb{C}^d$ with $x \approx e^{-i\theta} x_0$ for some $\theta \in [0, 2\pi]$ }
1. Compute $X_{\mathcal{S}} = \mathcal{A}|_{\mathcal{S}}^{-1} y  \in T_{\delta}(\mathcal{H}^d)$. \\
2. Evaluate Fourier coefficients of diagonals $L^r$ corresponding to columns of $V_1$ by \\
\hspace{0.3cm} Corollary \ref{cor:step2}. \\
3. Recover missing entries of $\widehat{L^r}$ for columns of $V_2$ by solving  \eqref{assertion_1_lemma_represent_Ll_by_Lk}.\\
4. Form $X$ with diagonals $L^r$.\\
5. Form the banded matrix of phases $\tilde{X} \in T_{\delta}(\mathcal{H}^d)$ defined in \eqref{norm_X}. \\
6. Compute the normalized top eigenvector of $\tilde{X}$, denoted by $\tilde{x} \in \mathbb{C}^d$ with $\norm{\tilde{x}}_2 = \sqrt{d}$.\\
7. Set $x_j = \sqrt{X_{j,j}} \cdot (\tilde{x})_j$ for all $j \in [d]$ to form $x \in \mathbb{C}^d$.
\caption{Fast Phase Retrieval from Locally Supported Measurements with Subspace Completion}
\label{algo:2}
\end{algorithm}


In general equation \eqref{assertion_1_lemma_represent_Ll_by_Lk} establishes a quadratic system. Arguably however, it is a common case that at most one Fourier coefficient of each diagonal is missing and then \eqref{assertion_1_lemma_represent_Ll_by_Lk} becomes a linear system. Indeed, the singular values corresponding to a single diagonal $L^r$ are given by 
\[
\left| \sum_{\ell=1}^{\delta -r +1} w_\ell^* w_{\ell + r-1} e^{ \frac{2 \pi i (k-1) (\ell-1)}{d} } \right|, \ k \in [d].
\]
Since this expression as a function of $k$ and extended to the full interval $[1,d]$ is highly oscillating, hitting zero at an integer point is a rare event, except for a zero that results from the symmetry for $k = d/2+1$ on some diagonals. Thus, encountering two zeros on one diagonal is much less common than just one. For the same reason it is commonly the case that on at least one of the diagonals one does not encounter any zeros. This heuristic leads to the assumptions of the following theorem, which are indeed sufficient to guarantee recovery.


\begin{restatable}{theorem}{theolin} \label{theo:linear system}
Assume that
\begin{itemize}
\item[\textbf{(A1)}] At least one diagonal $L^r$ is fully recovered in Step 2 of Algorithm \ref{algo:2}. \label{assump_1}
\item[\textbf{(A2)}] At most one Fourier coefficient of each diagonal is missing. \label{assump_2}
\end{itemize}
Then \eqref{assertion_1_lemma_represent_Ll_by_Lk} can be expressed as a linear system and Algorithm \ref{algo:2} recovers the lost Fourier coefficients via standard solution strategies.
\end{restatable}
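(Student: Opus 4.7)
The plan is to pass to the Fourier domain after substituting the fully-recovered reference diagonal from (A1) into Lemma \ref{lemma_represent_Ll_by_Lk}, and then to use (A2) to reduce each resulting equation to a linear relation in the single missing Fourier coefficient.

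Fix $r_0$ such that $L^{r_0}$ is fully recovered, as guaranteed by (A1). For every $\ell \in [\delta]$ with a missing Fourier coefficient, Lemma \ref{lemma_represent_Ll_by_Lk} yields
\begin{equation*}
L^{r_0} \circ S_{\ell-1}(L^{r_0})^* \;=\; L^\ell \circ S_{r_0-1}(L^\ell)^*,
\end{equation*}
whose left-hand side is entirely known. Apply the DFT to both sides. By the Convolution Theorem \eqref{convolution_theorem} together with Lemma \ref{lem:EFTP}, the transform of the right-hand side equals $\widehat{L^\ell} * \bigl(W_{r_0}\,\widehat{(L^\ell)^*}\bigr)$, and by part \ref{conjug_in_time} of Lemma \ref{lem:EFTP} the vector $\widehat{(L^\ell)^*}$ is the conjugate reflection of $\widehat{L^\ell}$. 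The $k$-th Fourier coefficient of the right-hand side therefore reads
\begin{equation*}
\sum_{j=1}^d \widehat{L^\ell}_{k-j+1}\,e^{2\pi i(r_0-1)(j-1)/d}\,\overline{\widehat{L^\ell}_{d-j+2}},
\end{equation*}
with all indices interpreted modulo $d$, while the $k$-th Fourier coefficient of the left-hand side is a known scalar.

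Denote the unique missing coefficient by $c := \widehat{L^\ell}_{\xi^*}$. The unknown $c$ appears in the sum above only at the index $j \equiv k-\xi^*+1$, and $\bar c$ only at $j \equiv d-\xi^*+2$. These two indices coincide iff $k \equiv 1$; for every $k \ne 1$ every other summand is known, and the equation becomes
\begin{equation*}
\alpha_k\,c \;+\; \beta_k\,\bar c \;=\; \gamma_k,
\end{equation*}
with $\alpha_k,\beta_k,\gamma_k$ computable from the already-recovered Fourier coefficients of $L^{r_0}$ and $L^\ell$. Considered over $\mathbb{R}$ in the unknowns $\Re c, \Im c$, this produces an overdetermined affine system in two real unknowns. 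Solving it, for instance by least squares on the full stack or by inverting any $2\times 2$ sub-block with nonvanishing determinant, yields $c$. Iterating over every $\ell$ with a missing entry carries out Step 3 of Algorithm \ref{algo:2}, and the case $\ell \in \{\delta+1,\ldots,2\delta-1\}$ reduces to $\ell \in [\delta]$ via the Hermitian structure inherent in \eqref{def2_L_k}.

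The main obstacle is guaranteeing that the assembled linear system has a rank-$2$ coefficient matrix, i.e., that some choice of indices $k_1,k_2$ produces $\alpha_{k_1}\beta_{k_2}-\alpha_{k_2}\beta_{k_1}\neq 0$. This determinant is a fixed polynomial in the already-recovered Fourier coefficients of $L^\ell$, so its vanishing carves out an algebraic subvariety that is avoided for generic windows; the oscillatory character of $\widehat{L^\ell}$ used to motivate (A2) in the discussion preceding the theorem should in fact supply the non-degeneracy. A secondary but more delicate point is to control the conditioning of this small system so as to propagate the noise robustness of Step~1 through Step~3, which is where the bulk of a quantitative recovery statement would have to be spent.
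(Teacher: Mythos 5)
Your proposal follows essentially the same route as the paper's proof: Fourier-transform the quadratic identity of Lemma~\ref{lemma_represent_Ll_by_Lk} against the fully known diagonal from \textbf{(A1)}, observe via \textbf{(A2)} that only the two summands containing $c$ and $\bar c$ are unknown (skipping the degenerate index where they coincide), and stack the resulting real $2\times 2$ blocks into an overdetermined least-squares system, finishing the lower triangle by Hermitian symmetry. The rank-$2$/non-degeneracy issue you flag at the end is a fair concern but is likewise left unaddressed in the paper, which simply asserts that the stacked system is solved by least squares.
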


The numerical experiments presented in the following section confirm that assumptions \textbf{(A1)} and \textbf{(A2)} are justified in realistic scenarios such as for Gaussian windows, and hence Algorithm \ref{algo:2} yields improved reconstruction quality.

\section{Numerics} \label{sec: numerics}

\begin{figure}[!b]
\begin{center}
\textbf{Robustness to Additive Gaussian Noise}\par\medskip
\end{center}
\begin{tikzpicture}[scale=0.89]

\pgfplotsset{
    scaled y ticks = false,
    axis on top,
    xtick = data,
    xticklabel style={text width=2em,align=center},
    xminorticks=true,
    yminorticks=true,
    ylabel shift={-1.5em},
    ylabel style={align=center}
}
    \begin{groupplot}[ 
        group style={
        group size=2 by 2,
        vertical sep=25pt,
        horizontal sep=35pt
        },
    ]

    \nextgroupplot[		
    		 ymode=log,
            ylabel={Reconstruction Error (mean)},
            title={a) $ d = 64$},
            cycle list name=exotic,
            xlabel={Noise level in SNR},
    ]
    \addplot  table[x = snr, y =yvalue_1,col sep=space]{pictures/datasets/custom_d64_delta8_BlockPR.csv};
    \addplot  table[x = snr, y =yvalue_2,col sep=space]{pictures/datasets/custom_d64_delta8_BlockPR.csv};
    
    \coordinate (c1) at (rel axis cs:0,1);

    \nextgroupplot[
            title={b) $d = 63$},
            ymode=log,
            ylabel={},
            cycle list name=exotic,
            xlabel={Noise level in SNR},
            legend style={at={($(0,0)$)},legend columns=3,fill=none,draw=black,anchor=center,align=center},
            legend to name= leg1
    ]
    \addplot  table[x = snr, y =yvalue_1,col sep=space]{pictures/datasets/custom_d63_delta8_BlockPR.csv};
    \addplot  table[x = snr, y =yvalue_2,col sep=space]{pictures/datasets/custom_d63_delta8_BlockPR.csv};
    \legend{
\texttt{BlockPR + SC$_{0}$},
\texttt{BlockPR},
}
    
    \coordinate (c2) at (rel axis cs:-0.1,0);
    \end{groupplot}
    \node[below] at (c2 |- current bounding box.south)
      {\pgfplotslegendfromname{leg1}};
\end{tikzpicture}%

\caption[Comparison of the reconstruction accuracy between the \textit{BlockPR} algorithm and the \textit{BlockPR + SC}$_{0}$ algorithm]{Comparison of the reconstruction accuracy between the \textit{BlockPR} algorithm and the \textit{BlockPR + SC}$_{0}$ algorithm for window size $\delta = 8$ and dimension $d=64$ (a) and $d=63$ (b).}
\label{fig:numerics_1}
\end{figure}
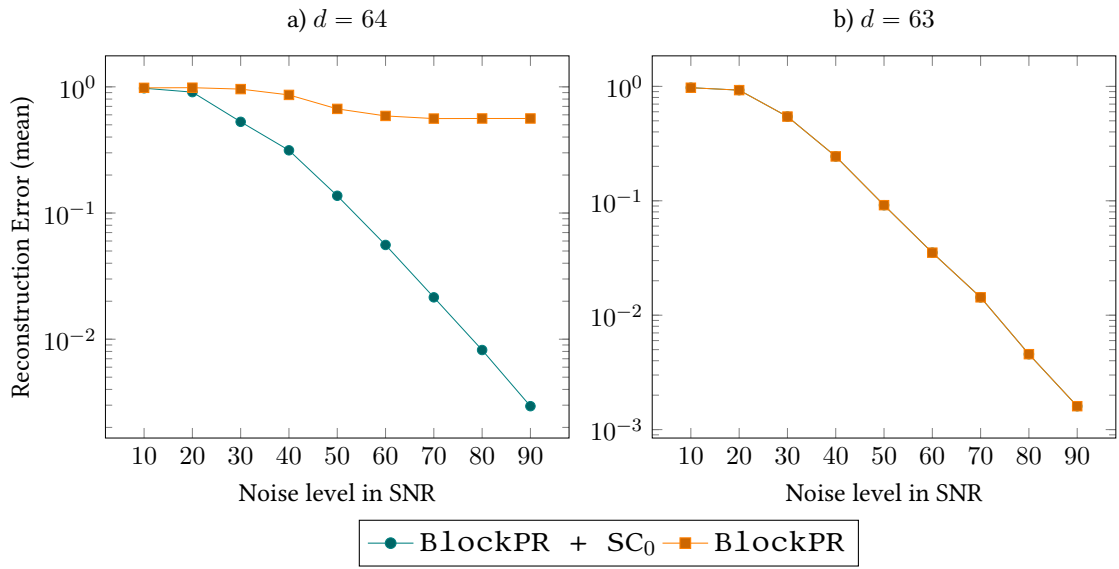

In this section we present numerical trials to asses the performance of Algorithm \ref{algo:2}. Our main objective is to demonstrate the positive effect of the additional recovery step between  Steps 1 and 2 of Algorithm \ref{algo:1}. In the following, we will denote by \textit{BlockPR} the  algorithm without that additional step as originally proposed in \cite{Iwen.04.12.2016} and summarized in Algorithm \ref{algo:1} above. As discussed in Section \ref{sec: results}, the subspace completion step is not only useful when information gets lost due to zero singular values, but also in case of very small singular values that can give rise to instabilities. In that case, Algorithm \ref{algo:2} is applied after deleting all the information which corresponds to singular values of the measurement operator $\mathcal{A}|_{T_{\delta}(\mathcal{H}^d)}$ below some threshold $\varepsilon \ge 0$. We will refer to this combined procedure by  \textit{\mbox{BlockPR + SC}}$_\varepsilon$, where $\varepsilon$ indicates the truncation level.\\ 

We will discuss three different examples in this section. In the first two examples we consider the Gaussian window \eqref{def_gaussian_window} with $\sigma = 0.3$ and in the last example the exponential window as in \eqref{measurement_masks_paper} with $\alpha=1$. 

We first illustrate the guarantees of Theorem \ref{theo:linear system}. Indeed, Gaussian windows in even dimensions fulfill the assumptions of Corollary \ref{theo:operator non invertible} and also of Theorem \ref{theo:linear system}, which indicates that \textit{\mbox{BlockPR + SC}}$_{0}$ should significantly outperform the original \textit{BlockPR} algorithm. Figure \ref{fig:numerics_1} demonstrates that this is the case.

In the second line of simulations we compare the reconstruction accuracy achieved for different choices of truncation parameters $\varepsilon$. Figures \ref{fig:numerics_2} and \ref{fig:numerics_3} show that larger truncation thresholds yield better results for larger noise levels and smaller thresholds are better suited for smaller noise levels. For small noise our approach significantly outperforms the competitor algorithm \textit{Wirtinger Flow}.



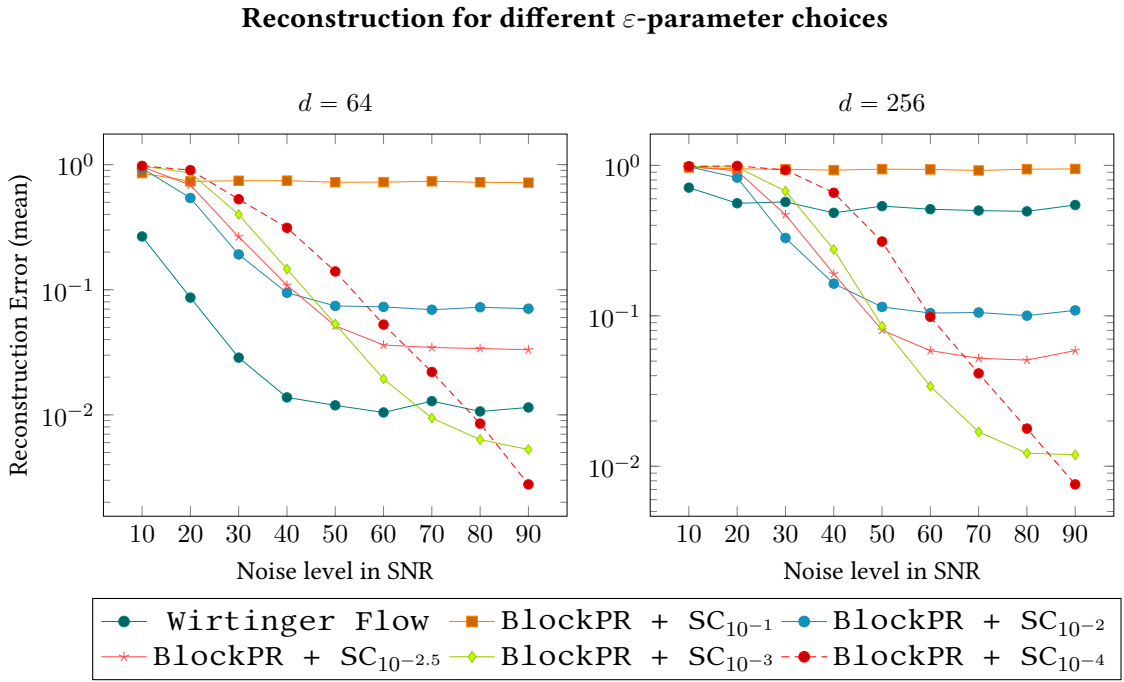
\begin{figure}[!b]
\begin{center}
\textbf{Reconstruction for different $\varepsilon$-parameter choices}\par\medskip
\end{center}
\begin{tikzpicture}[scale=0.89]

\pgfplotsset{
    scaled y ticks = false,
    axis on top,
    xtick = data,
    xticklabel style={text width=2em,align=center},
    xminorticks=true,
    yminorticks=true,
    ylabel shift={-1.5em},
    ylabel style={align=center}
}
    \begin{groupplot}[ 
        group style={
        group size=2 by 2,
        vertical sep=25pt,
        horizontal sep=35pt
        },
    ]

    \nextgroupplot[		
    		 ymode=log,
            ylabel={Reconstruction Error (mean)},
            title={$d = 64$},
            cycle list name=exotic,
            xlabel={Noise level in SNR},
    ]
    \addplot  table[x = snr, y =yvalue_1,col sep=space]{pictures/datasets/custom_d64_delta8.csv};
    \addplot  table[x = snr, y =yvalue_6,col sep=space]{pictures/datasets/custom_d64_delta8.csv};
    \addplot  table[x = snr, y =yvalue_5,col sep=space]{pictures/datasets/custom_d64_delta8.csv};
    \addplot  table[x = snr, y =yvalue_4,col sep=space]{pictures/datasets/custom_d64_delta8.csv};
    \addplot  table[x = snr, y =yvalue_3,col sep=space]{pictures/datasets/custom_d64_delta8.csv};
    \addplot  table[x = snr, y =yvalue_2,col sep=space]{pictures/datasets/custom_d64_delta8.csv};
    
    \coordinate (c1) at (rel axis cs:0,1);

    \nextgroupplot[
            title={$d = 256$},
            ymode=log,
            ylabel={},
            cycle list name=exotic,
            xlabel={Noise level in SNR},
            legend style={at={($(0,0)$)},legend columns=3,fill=none,draw=black,anchor=center,align=center},
            legend to name= leg1
    ]
    \addplot  table[x = snr, y =yvalue_1,col sep=space]{pictures/datasets/custom_d256_delta8.csv};
    \addplot  table[x = snr, y =yvalue_6,col sep=space]{pictures/datasets/custom_d256_delta8.csv};
    \addplot  table[x = snr, y =yvalue_5,col sep=space]{pictures/datasets/custom_d256_delta8.csv};
    \addplot  table[x = snr, y =yvalue_4,col sep=space]{pictures/datasets/custom_d256_delta8.csv};
    \addplot  table[x = snr, y =yvalue_3,col sep=space]{pictures/datasets/custom_d256_delta8.csv};
    \addplot  table[x = snr, y =yvalue_2,col sep=space]{pictures/datasets/custom_d256_delta8.csv};
    \legend{
\texttt{Wirtinger Flow},
\texttt{BlockPR + SC$_{10^{-1}}$},
\texttt{BlockPR + SC$_{10^{-2}}$},
\texttt{BlockPR + SC$_{10^{-2.5}}$},
\texttt{BlockPR + SC$_{10^{-3}}$},
\texttt{BlockPR + SC$_{10^{-4}}$},
}
    
    \coordinate (c2) at (rel axis cs:-0.1,0);
    \end{groupplot}
    \node[below] at (c2 |- current bounding box.south)
      {\pgfplotslegendfromname{leg1}};
\end{tikzpicture}%

\caption[Comparison of different truncation parameters $\varepsilon$ and \textit{Wirtinger Flow}]{Comparison of the reconstruction accuracy for different truncation parameters $\varepsilon$ for the \textit{BlockPR + SC}$_\varepsilon$ algorithm and \textit{Wirtinger Flow} for window size $\delta = 8$ and dimension $d=64$ (left) and $d=256$ (right).}
\label{fig:numerics_2}
\end{figure}

More specifically, we perform the following numerical experiments. For synthetic signal simulation we use i.i.d. zero-mean complex random multivariate Gaussian vectors. To model noisy data we add random Gaussian noise to our measurements. The signal to noise ratios (SNRs) will be measured in decibels (dB), that is, we consider
\begin{equation*} \label{def_snr_level}
\text{SNR (dB)} = 10 \log_{10} \Bigg( \frac{\sum_{\l = 0}^{d-1} \sum_{j=1}^{2\delta -1} (y_\l)_j^2 }{d(2\delta-1) \nu^2}\Bigg),
\end{equation*}
where $(y_\l)_j$ are measurements as in \eqref{def_y} and $\nu^2$ denotes the variance of  the Gaussian noise. Unless otherwise stated, we consider the relative error between the true underlying signal $x_0 \in \mathbb{C}^d$ and its estimate $x \in \mathbb{C}^d$  up to a global phase, i.e.
\begin{equation*}\label{def_recon_error}
\min_{\theta \in [0, 2 \pi]} \frac{\norm{x- e^{i \theta} x_0}_2}{\norm{x_0}_2}.
\end{equation*}
For more representative results each data point of the following Figures is the average of reconstructing 100 different test signals.\\

We begin with a first experiment demonstrating the improved reconstruction accuracy of the \textit{BlockPR + SC}$_{0}$ algorithm over the \textit{BlockPR} algorithm for the masks of the form \eqref{def_general_mask} with Gaussian window \eqref{def_gaussian_window}. Figure \ref{fig:numerics_1} a) shows the result of reconstructing a test signal of dimension $d=64$ for a window size $\delta = 8$ and different noise levels. As predicted by Corollary \ref{theo:operator non invertible}, the \textit{BlockPR} algorithm yields a large reconstruction error due to zero singular values resulting from the symmetry of the window. The reconstruction error of the \textit{BlockPR+SC$_0$} algorithm, in contrast, shows significant decay for decreasing noise levels, similar to odd dimensions, where no singularities arise (see Figure \ref{fig:numerics_1} b) for an example, where the results of \textit{BlockPR} and \textit{BlockPR+SC$_0$} agree as there are no zero singular values. This decay is in line with Theorem \ref{theo:linear system}. Indeed, Table~\ref{tab: sing} shows that there are only $\delta -1 = 7$ zero singular values. So, by Corollary \ref{theo:missing entries}, there is at most one zero singular value per diagonal. That is, assumptions \textbf{(A1)} and \textbf{(A2)} are satisfied, which explains the good performance  of  \textit{BlockPR+SC$_0$}. 

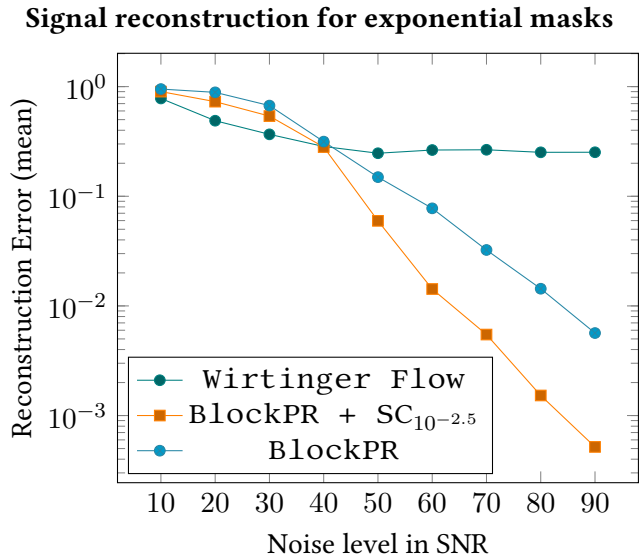
\begin{figure}[!b]
\begin{center}
\textbf{Signal reconstruction for exponential masks}\par\medskip

\begin{tikzpicture}[scale=1, transform shape]
\pgfplotstableread{pictures/datasets/expdecay_d64_delta8.csv}{\linea}
\begin{semilogyaxis}[
xlabel={Noise level in SNR},
ylabel={Reconstruction Error (mean)}, xtick=data,
legend style={at={(0.02,0.02)},anchor=south west} 
,cycle list name=exotic]

\addplot table[x=snr, y=yvalue_1]{\linea};
\addplot table[x=snr, y=yvalue_2]{\linea};
\addplot table[x=snr, y=yvalue_3]{\linea};

\legend{
\texttt{Wirtinger Flow},
\texttt{BlockPR + SC$_{10^{-2.5}}$},
\texttt{BlockPR},
}
\end{semilogyaxis}

\end{tikzpicture}
\end{center}
\caption[]{Reconstruction of a $d=64$ dimensional signal with exponential masks for \textit{BlockPR}, \textit{BlockPR + SC}$_{10^{-2.5}}$ and \textit{Wirtinger Flow} }
\label{fig:numerics_3}
\end{figure}


\begin{table}[!t] 
\centering
\caption{ Distribution of singular values for $d =64$ and $\delta = 8$ for Gaussian window.}
\label{tab: sing}
\begin{tabular}{|c|c|c|c|c|c|c|} 
\hline
Total & $ \{0\} $ & $(10^{-4}, 10^{-3}]$ & $(10^{-3}, 10^{-2.5}]$ & $(10^{-2.5}, 0.01]$ & $(0.01, 0.1]$ & $(0.1, 10]$  \\
\hline
960 & 7 &  8  & 22 & 50 & 578 & 295 \\
\hline
\end{tabular}
\end{table}


In the remainder of the section we analyze the effect of different truncation levels. As we observed in Figure \ref{fig:numerics_1}, without truncation ($\varepsilon=0$) Algorithm \ref{algo:2} struggles with high noise. The reason behind the suboptimal reconstruction quality is due to the large condition number of the operator $\mathcal{A}|_{\mathcal{S}}^{-1}$, whose application is at the core of the \textit{BlockPR} algorithm. With the subspace completion technique at hand we improve the conditioning by disregarding information corresponding to the small singular values and completing it after the inversion step.

The effect of different choices of $\varepsilon$ is illustrated in Figure \ref{fig:numerics_2} for dimensions $d=64$ and $d=256$. For comparison, we also include the \textit{Wirtinger Flow} algorithm  \cite{Candes.2015}. 

We observe that for increased truncation parameters $\varepsilon$ reconstruction at low and medium SNRs improves significantly. However, the error does not converge to zero anymore when noise diminishes as assumptions \textbf{(A1)} and \textbf{(A2)} are no longer universally satisfied. In this case one can no longer solve a linear system; rather, our implementation proceeds sequentially treating all unknown values as zero. This is of course only a heuristics and not covered by our theory. In fact, large truncation thresholds lead to a significant loss of information -- e. g., for $d = 64$ and $\varepsilon = 10^{-1}$ Table \ref{tab: sing} shows that $2/3$ of singular values are deleted, so complete reconstruction is no longer possible even for very low noise. Thus, the optimal choice of $\varepsilon$ depends on the noise level and should be based on a trade-off between noise robustness for low SNRs and perfect reconstruction for high SNRs.

Increasing the dimension from $d=64$ to $d = 256$ leads to slightly worse recovery with Algorithm \ref{algo:2}. However, the impact of the problem size on the performance of \textit{Wirtinger Flow} is much stronger. While for $d=64$ it clearly outperforms Algorithm \ref{algo:2} except for very low noise levels, for dimension $d=256$ this is the case only for rather large noise. 

We point out that the good performance of Algorithm \ref{algo:2} is not limited to Gaussian windows.  In our final example, we show that the subspace completion technique can be used to increase the set of parameters for exponential masks \eqref{measurement_masks_paper}. In this example we use $\alpha =1$, which is beyond the range of stable invertibility. Figure \ref{fig:numerics_3} illustrates the performance of Algorithms \ref{algo:1} and \ref{algo:2} together with \textit{Wirtinger Flow} for such masks. Although \textit{BlockPR} outperforms \textit{Wirtinger Flow} for moderate noise levels, we improve the accuracy even further by the application of Algorithm \ref{algo:2} with regularization parameter $\varepsilon = 10^{-2.5}$.


\section{Proofs}


\subsection{Singular values of measurement operator and proof of Corollary \ref{theo:operator non invertible}}

As discussed in Section \ref{sec: results}, the action of the measurement operator $\mathcal A$ can be described as a matrix-vector product
\[
M \vect(X_0)=y. 
\] 


As shown in Sections 2.2 and 4.4 of \cite{Iwen.2016}, the matrix $M$ takes the form
\begin{equation*}
M = \left( \begin{array}{rrrrrrrrr} M_1 & M_2 & \cdots & M_\delta & 0 & 0 & \cdots & 0 \\ 0 & M_1 &  \cdots & M_{\delta -1} & M_\delta & 0 & \cdots & 0 \\ \vdots & \vdots  & \ddots & \vdots & \vdots & \vdots & \ddots & \vdots \\ M_2 & M_3 & \cdots & 0 & 0 & 0 & \cdots & M_1 \end{array}\right),
\end{equation*}
where $M_\l$, $\l \in [\delta]$, are $(2\delta -1) \times (2\delta -1)$ matrices given by
\begin{align*}
\left(M_\l \right)_{n,j} &= 
    \begin{cases}
      \frac{\overline{w}_\l w_{j+\l-1}}{\sqrt{2\delta-1}} \cdot e^{-\frac{2 \pi i (n-1) (j-1)}{2\delta-1}} & \text{if $1 \leq j \leq \delta -\l +1$,}\\
      0 & \text{if $\delta - \l +2  \leq j \leq 2\delta - \l -1,$}\\      
      \frac{\overline{w}_{\l+1} w_{\l+j-2\delta+1}}{\sqrt{2\delta-1}} \cdot e^{-\frac{2\pi i (n-1) (j-2\delta)}{2\delta-1}} & \text{if $ 2\delta - \l \leq j \leq 2\delta -1, \l < \delta,$} \\
      0 & \text{if $j>1,$ and $\l =\delta$},\\
    \end{cases}
\end{align*}


As shown in \cite{Iwen.2016}, $M$ can be block diagonalized using the unitary block Fourier matrix $U_{2\delta-1} \in \mathbb{C}^{D \times D}$ defined as 

\begin{equation*}\label{def_u}
U_{2\delta -1} := \frac{1}{\sqrt{d}}\left (
\begin{array}{rrrr} I_{2\delta-1} & I_{2\delta-1} & \cdots & I_{2\delta-1} \\ I_{2\delta-1} & I_{2\delta-1} e^{\frac{2\pi i}{d}} & \cdots & I_{2\delta-1} e^{\frac{2\pi i (d-1)}{d}} \\  &  & \ddots &  \\ I_{2\delta-1} & I_{2\delta-1} e^{\frac{2 \pi i (d-2)}{d}} & \cdots & I_{2\delta-1} e^{\frac{2 \pi i (d-2)(d-1)}{d}} \\ I_{2\delta-1} & I_{2\delta-1} e^{\frac{2 \pi i (d-1)}{d}} & \cdots & I_{2\delta-1} e^{\frac{2 \pi i (d-1)(d-1)}{d}} \\ \end{array} \right),
\end{equation*}
where $I_{2\delta-1}$ denotes an ${(2\delta-1)} \times {(2\delta-1)}$ identity matrix. More precisely,
\begin{equation*}
M = U_{2\delta-1} J U_{2\delta-1}^*,
\label{def_rep_M}
\end{equation*}
where 
$J \in \mathbb{C}^{D \times D}$ is a block diagonal matrix of the form
\begin{equation*}
J := 
\left( \begin{array}{rrrrr} 
J_1 & 0 & \cdots & 0 & 0 \\ 
0 & J_2 & \cdots & 0 & 0 \\  
\vdots & \vdots  & \ddots & \vdots & \vdots \\ 
0 & 0 & \cdots & J_{d-1} & 0 \\
0 & 0 & \cdots & 0 & J_d 
\end{array}\right) ,
\end{equation*}
with blocks
\begin{equation*}
J_k := \displaystyle\sum_{\l=1}^{\delta} M_\l \cdot e^{\frac{2 \pi i (k-1) (\l-1)}{d}}, k \in [d].
\end{equation*}

A single block matrix $J_k \in \mathbb{C}^{(2 \delta-1) \times (2 \delta -1)}$ can be expressed as
\begin{equation}\label{def_J_k}
J_k = F_{2 \delta -1} \cdot \left( 
\begin{array}{rrrr}
z_{k,1} & 0 & \cdots & 0 \\ 
0 & z_{k,2} & 0 & \vdots \\ 
0 & 0 & \ddots & 0 \\ 
0 & \cdots & 0 & z_{k,2\delta-1}
\end{array} \right), 
\end{equation}
where $F_{2 \delta -1}$ denotes a discrete Fourier transform matrix of size $\mathbb{C}^{(2 \delta -1) \times (2 \delta -1)}$. 
As $U_{2\delta-1}$ and $F_{2\delta-1}$ are unitary matrices, the absolute values of the $z_{k,j}$'s are the singular values of $M$. 
For masks of the form \eqref{measurement_masks_paper} upper bounds on the condition number of these matrices have been shown in \cite{Iwen.2016}; they guarantee the stable invertibility of the operator $\mathcal{A}|_{T_{\delta}(\mathcal{H}^d)}$ and matrix $M$ respectively. The next proposition, however, shows that this is specific to exponentially decaying masks and that for a large class of masks naturally appearing in ptychography the operator $\mathcal{A}|_{T_{\delta}(\mathcal{H}^d)} $ is not only ill-conditioned but even singular.

\begin{proposition} \label{theorem_0_eigenvalues}
Consider masks of the form \eqref{def_general_mask}. Assume that $d$ is even and the window $w$ satisfies the symmetry condition
\begin{equation*}
w_n  = \overline{w}_{\delta - n +1} \text{ for all } n \in [\delta].
\end{equation*}
Then it holds that
\begin{align}\label{eq: theo_sing_val_zeros}
z_{\frac{d}{2}+1,j} = 0 \text{ if }\begin{cases}
       \text{ $\delta-j$ is odd where $j \in [\delta]$, } \\
      \text{ $\delta-j$ is even where $j \in [\delta-1]_{\delta+1}$ .}\\
    \end{cases}
\end{align}
\end{proposition}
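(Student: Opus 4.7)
The strategy will exploit the block-diagonalization $M = U_{2\delta-1} J U_{2\delta-1}^*$ with $J_k = F_{2\delta-1} \cdot \operatorname{diag}(z_{k,1},\ldots,z_{k,2\delta-1})$ already established. Since $F_{2\delta-1}$ is unitary, each $z_{k,j}$ is read off (up to a phase factor) from a single column of $J_k$, so I only need to evaluate that column at $k = d/2+1$ and show it vanishes for the claimed indices.

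First I would write $J_k = \sum_{\ell=1}^{\delta} M_\ell e^{2\pi i (k-1)(\ell-1)/d}$ and insert the piecewise definition of $M_\ell$. For $j \in [\delta]$ only the ``first range'' of $M_\ell$ contributes (the second requires $j \geq 2\delta - \ell \geq \delta+1$), yielding
\begin{equation*}
z_{k,j} \;=\; \sum_{\ell=1}^{\delta-j+1} \overline{w_\ell}\, w_{\ell+j-1}\, e^{2\pi i (k-1)(\ell-1)/d},
\end{equation*}
consistent with the heuristic formula for singular values along diagonal $L^j$. For $j \in \{\delta+1,\ldots,2\delta-1\}$ only the ``second range'' contributes; after the index shift $\ell \to \ell + 2\delta - j - 1$ and introducing $r' := 2\delta-j+1 \in [2,\delta]$, one obtains $z_{k,j}$ as a unimodular phase times the complex conjugate of the same expression with $r'$ playing the role of $j$. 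In both cases, setting $k = d/2+1$ collapses the exponential to $(-1)^{\ell-1}$, so the vanishing of $z_{d/2+1,j}$ is equivalent to the vanishing of the alternating sum
\begin{equation*}
S_r \;:=\; \sum_{\ell=1}^{\delta-r+1} a_\ell\, (-1)^{\ell-1}, \qquad a_\ell := \overline{w_\ell}\, w_{\ell+r-1},
\end{equation*}
for $r = j$ (case 1) or $r = r' = 2\delta-j+1$ (case 2). A short parity check confirms that the two conditions in \eqref{eq: theo_sing_val_zeros} both collapse to the single condition ``$\delta - r$ is odd.''

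The core analytic step is then to establish the palindromic identity $a_\ell = a_{\delta-r+2-\ell}$. Using $w_n = \overline{w_{\delta-n+1}}$, I write $\overline{w_\ell} = w_{\delta-\ell+1}$ and $w_{\ell+r-1} = \overline{w_{\delta-\ell-r+2}}$, so $a_\ell = w_{\delta-\ell+1}\overline{w_{\delta-\ell-r+2}}$; the substitution $\ell \to \delta-r+2-\ell$ interchanges the two indices $\delta-\ell+1$ and $\delta-\ell-r+2$, leaving $a_\ell$ invariant. Pairing $\ell$ with $\ell^* := \delta-r+2-\ell$ in $S_r$, the corresponding signs differ by $(-1)^{\ell^*-\ell} = (-1)^{\delta-r+2-2\ell} = (-1)^{\delta-r}$, so each pair cancels precisely when $\delta-r$ is odd. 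In that case $\delta-r+2$ is also odd, so $\ell = \ell^*$ has no integer solution and no unpaired middle term survives; hence $S_r = 0$, giving $z_{d/2+1,j} = 0$ in both cases.

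The main obstacle is purely bookkeeping: matching the two index ranges $j \in [\delta]$ and $j \in [\delta-1]_{\delta+1}$ in the statement to a uniform condition on $r$, and being careful that the second range only contributes through a conjugated sum with shifted exponent. Once the two cases are unified via $r \in [\delta]$ and the parity condition is translated to ``$\delta - r$ odd,'' the symmetry-plus-pairing argument is a one-line cancellation.
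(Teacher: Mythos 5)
Your proposal is correct and follows essentially the same route as the paper: derive the explicit formula $z_{k,j}=\sum_{\ell}\overline{w}_\ell w_{\ell+j-1}e^{2\pi i(k-1)(\ell-1)/d}$, set $k=d/2+1$ to reduce to an alternating sum, and use the window symmetry to force cancellation, with the second index range reduced to the first. The only cosmetic difference is that you cancel by pairing $\ell$ with $\delta-r+2-\ell$, whereas the paper averages the sum with its symmetry-reindexed form to produce the vanishing factor $1+(-1)^{\delta-j-2\ell}$ -- these are the same computation.
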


To fully understand the implications of Proposition \ref{theorem_0_eigenvalues} we note that the singular value decomposition of the matrix $M$ 
takes the form
\begin{equation} \label{def_svd_fourier_measurements}
M = U_{2\delta-1} J U_{2\delta-1}^* = \underbrace{U_{2\delta-1}  
\left( \begin{array}{rrr}
F_{2 \delta -1} & & 0 \\  
& \ddots &  \\ 
0 & & F_{2\delta -1}
\end{array} \right) \sign \left( Z \right)}_{U} \underbrace{\left| Z \right|}_{\Sigma} \underbrace{  U_{2\delta-1}^*}_{V^*},
\end{equation}
where $Z$ is the diagonal matrix in $\mathbb{C}^{D \times D}$, which contains all $z_{k,j}$ for $k \in [d], j \in [2\delta-1]$ on its main diagonal and the operations $\sign$ and $|\cdot|$ are taken entrywise.

With this expression for the SVD at hand, Proposition \ref{theorem_0_eigenvalues} directly yields Corollary \ref{theo:operator non invertible}, as every vanishing $z_{k,j}$ corresponds to a zero singular value of $M$. Therefore, the matrix $M$ cannot be invertible and, consequently, neither can the operator $\mathcal{A}|_{T_{\delta}(\mathcal{H}^d)}$, as stated in Corollary \ref{theo:operator non invertible}.\\
Moreover, Proposition \ref{theorem_0_eigenvalues} provides explicit expressions for the index set $\mathcal{I}$ corresponding to these zero singular values of $M$, as we will see below.

\begin{proof}[Proof of Proposition \ref{theorem_0_eigenvalues}]
We first consider the case $1 \leq j \leq \delta$. 
By \eqref{def_J_k}, 
\begin{align*}
z_{k,j} & = (F^*_{2\delta-1} \cdot J_k)_{j,j} = \sum_{\l=1}^{\delta -j +1} \overline{w}_\l w_{j+\l-1} e^{\frac{2 \pi i (k-1) (\l-1)}{d}}.
\end{align*}

Setting $k = \frac{d}{2} +1$, we get
\begin{align*}
z_{\frac{d}{2} +1,j} 
& = \sum_{\l=1}^{\delta -j +1} \overline{w}_\l w_{j+\l-1} e^{ \pi i (\l-1) } 
= \sum_{\l=0}^{\delta -j} \overline{w}_{\l+1} w_{j+\l} (-1)^{\l} \\
& = \sum_{\l=0}^{\delta -j} w_{\delta -\l} \overline{w}_{\delta -j - \l +1}  (-1)^{\l} 
= \sum_{\l=0}^{\delta -j} w_{j+\l} \overline{w}_{\l+1} (-1)^{(\delta -j - \l)},
\end{align*}
where in the third equality we used the symmetry of the window $w$ and in the last one we changed the summation order.
We continue by averaging the first and last reformulation of $z_{\frac{d}{2} +1,j}$ obtaining
\begin{align*} \label{eq:proof_theo_sing_val1}
z_{\frac{d}{2} +1,j} 
&= \frac{1}{2} z_{\frac{d}{2} +1,j}  + \frac{1}{2} z_{\frac{d}{2} +1,j}  = \sum_{\l=0}^{\delta -j} w_{j+\l} \overline{w}_{\l+1} \frac{1}{2} \left( (-1)^\l + (-1)^{\delta -j - \l} \right) \nonumber \\
&= \sum_{\l=0}^{\delta -j} w_{j+\l} \overline{w}_{\l+1} \frac{(-1)^\l}{2} \left( 1 + (-1)^{\delta -j - 2\l} \right).
\end{align*}
When $\delta - j$ is odd, the last factor vanishes in all the summands and hence 
\[
z_{\frac{d}{2} +1,j} = 0.
\]
For the case $\delta +1 \leq j \leq 2\delta -1$, we get analogously
\begin{align*}
z_{\frac{d}{2} +1,j}   
&= 0 \text{ if $\delta - j$ is even.}
\end{align*}

\end{proof}
 Proposition \ref{theorem_0_eigenvalues} can be reformulated in terms of the matrix $V_2$ as follows.
\begin{corollary} \label{theo:missing entries}
Consider measurement masks of the form \eqref{def_general_mask}. Assume that $d$ is even and the window $w$ satisfies the symmetry condition
\begin{equation*}
w_n = \overline{w}_{\delta - n +1} \text{ for all } n \in [\delta].
\end{equation*}
Then the right singular vector $V^{(q)}$  corresponding to the singular value $|z_{\frac{d}{2}+1,r}|$
 is a column of $V_2$ provided  
the index $q$ is of the form
\begin{equation*} 
q = (2\delta -1) (d/2+1) + r
\end{equation*}
with
$r \in \mathcal{I}$, where the set $\mathcal I$ of size $ \left | \mathcal{I} \right| = \delta -1$ is given by
\begin{align*}
\mathcal{I} = \lbrace j \in [\delta]: \delta - j \text{ is odd}  \rbrace \cup \lbrace j \in [\delta-1]_{\delta+1}: \delta - j \text{ is even}  \rbrace
\end{align*}
Equivalently, $ \widehat{L^r}_{\frac{d}{2}+1}, r \in \mathcal{I},$ are not recovered in the step 2 of Algorithm \ref{algo:2}.
\end{corollary}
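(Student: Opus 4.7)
The plan is to read the corollary off Proposition \ref{theorem_0_eigenvalues} together with the explicit SVD \eqref{def_svd_fourier_measurements}; at heart the statement is a bookkeeping reformulation identifying \emph{which} columns of $V = U_{2\delta-1}$ are associated with the vanishing singular values already located in Proposition \ref{theorem_0_eigenvalues}. First, I would fix the convention that the singular values $|z_{k,j}|$ are listed on the diagonal of $\Sigma = |Z|$ lexicographically in $(k,j) \in [d] \times [2\delta-1]$, so that the $q$-th singular value corresponds to the pair $(\xi,r)$ with $q = (2\delta-1)(\xi-1) + r$; this matches the indexing already used in Theorem \ref{theo:diagonal_vec_relation}. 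The matrix $V_2$ by definition gathers the columns of $V$ associated with singular values below the truncation threshold, here $\varepsilon = 0$, i.e.\ the columns associated with indices $(k,j)$ for which $z_{k,j} = 0$.

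Next, I would invoke Proposition \ref{theorem_0_eigenvalues}: under the symmetry hypothesis, $z_{k,j} = 0$ occurs at $k = d/2 + 1$ for exactly the values $j$ described by the two cases of \eqref{eq: theo_sing_val_zeros}. The union of these two cases is precisely the set $\mathcal I$ in the corollary, and plugging $\xi = d/2 + 1$ into the indexing formula yields the stated linear index $q$ for the corresponding columns of $V_2$.

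For the cardinality $|\mathcal I| = \delta - 1$, I would do a direct parity count in the two ranges $j \in [\delta]$ and $j \in [\delta-1]_{\delta+1}$. If $\delta$ is even, the first set picks the $\delta/2$ odd $j$'s in $[\delta]$ and the second the $\delta/2 - 1$ even $j$'s in $\{\delta+1,\dots,2\delta-1\}$; if $\delta$ is odd, each set contributes $(\delta-1)/2$ elements. Either way the total is $\delta - 1$, giving the claimed count.

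Finally, the ``equivalently'' clause is immediate from Corollary \ref{cor:step2}: whenever $V^{(q)}$ is a column of $V_2$, the inner product $\sqrt{d}\,\langle \vect(X_{\mathcal S}), V^{(q)}\rangle$ equals $0$ rather than $\widehat{L^r}_{\xi}$, so with $\xi = d/2 + 1$ the coefficients $\widehat{L^r}_{d/2+1}$ for $r \in \mathcal I$ are precisely the ones left uncomputed after Step 2 of Algorithm \ref{algo:2}. I do not expect a genuine obstacle here; the only point that requires care is to keep the two indexing conventions (pairs $(k,j)$ from \eqref{def_J_k} versus the linear index $q$ of Theorem \ref{theo:diagonal_vec_relation}) consistently aligned with the column splitting $V = (V_1\;V_2)$.
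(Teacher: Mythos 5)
Your proposal is correct and takes essentially the same route as the paper, which presents this corollary as a direct reformulation of Proposition \ref{theorem_0_eigenvalues} using the explicit SVD \eqref{def_svd_fourier_measurements} (so that $V=U_{2\delta-1}$ and the columns associated with vanishing $z_{d/2+1,j}$ land in $V_2$), the same lexicographic pairing $q=(2\delta-1)(\xi-1)+r$, the same parity count giving $|\mathcal{I}|=\delta-1$, and Corollary \ref{cor:step2} for the ``equivalently'' clause. One small caveat: with that decomposition, $\xi=d/2+1$ gives $q=(2\delta-1)\,(d/2)+r$ rather than the displayed $q=(2\delta-1)(d/2+1)+r$, which appears to be an off-by-one in the paper's statement; your claim that substituting $\xi=d/2+1$ ``yields the stated linear index'' silently inherits this discrepancy and should be flagged rather than asserted.
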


\subsection{Proof of Theorem~\ref{theo:diagonal_vec_relation} and Corollary \ref{cor:step2}}



\begin{proof}[Proof of Theorem \ref{theo:diagonal_vec_relation}]
First we use the definition of the DFT \eqref{def_discrete_fourier_trans} and the diagonals $L^r$ \eqref{def2_L_k} to obtain
\begin{align*}
\widehat{L^{r}}_{\xi} &= \displaystyle\sum_{n=1}^d e^{-\frac{2 \pi i (n-1) (\xi-1)}{d} } L^r_n \\
&= \begin{cases}
	\displaystyle\sum_{n=1}^d e^{-\frac{2 \pi i (n-1) (\xi-1)}{d} } \overline{(x_0)}_n (x_0)_{n+r-1} & \text{if  $ \; 1 \leq r \leq \delta$,}\\
	\displaystyle\sum_{n=1}^d e^{-\frac{2 \pi i (n-1) (\xi-1)}{d} } \overline{(x_0)}_{n+1} (x_0)_{n+1+r-2\delta}  & \text{if  $ \; 1+\delta \leq r \leq 2\delta -1$.}\\
    \end{cases}
\end{align*}
To reformulate the right hand side of \eqref{eq:theo_diagonal_vec_relation}, we need an alternative representation of the vectorization of $X_0$ as given in the following lemma.

\begin{restatable}{lemma}{ixo}
\label{lem:indexX0}
Decomposing $q\in [D]$ as 
\[
q=(2\delta-1)(\xi-1)+r \text{ with } \xi \in [d] \text{, } r\in [2\delta -1],
\]
one can express the vectorization of $X_0$ from formula \eqref{def_vec_X0} as
\begin{equation*}
  \vect(X_0)_q =
    \begin{cases}
      \overline{(x_0)}_{\xi} (x_0)_{\xi+r-1} 
& \text{if  $ \; 1 \leq r \leq \delta$,}\\
      \overline{(x_0)}_{\xi+1} (x_0)_{\xi+r-2\delta +1} & \text{if  $ \; 1+\delta \leq r \leq 2\delta -1$.}\\
    \end{cases}
\end{equation*}

\end{restatable}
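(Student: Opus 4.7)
The lemma is a direct index computation: I would just substitute the decomposition $q = (2\delta-1)(\xi-1) + r$ into the definition \eqref{def_vec_X0} and verify that the ceiling and modulo expressions collapse to the simple form claimed, splitting into the two cases $r \in [\delta]$ and $r \in [\delta-1]_{\delta+1}$.

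Concretely, I would start by isolating the ceiling term
\[
\lceil \tfrac{q+\delta-1}{2\delta-1}\rceil = (\xi-1) + \lceil \tfrac{r+\delta-1}{2\delta-1}\rceil,
\]
which I get by pulling the integer multiple of $2\delta-1$ out of the ceiling. For $r \in [\delta]$ we have $r+\delta-1 \in \{\delta,\dots,2\delta-1\}$, so the remaining ceiling equals $1$ and the first index of $x_0$ becomes $\xi$. For $r \in [\delta-1]_{\delta+1}$ we have $r+\delta-1 \in \{2\delta,\dots,3\delta-2\}$, which is strictly greater than $2\delta-1$ but at most $2(2\delta-1)$ (using $\delta \ge 1$), so the remaining ceiling equals $2$ and the first index becomes $\xi+1$. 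This matches the two cases of the claimed formula.

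Next I would handle the second subscript. Since $q+\delta-2 \equiv r+\delta-2 \pmod{2\delta-1}$, I need to compute $(r+\delta-2)\bmod(2\delta-1)$. For $r \in [\delta]$ we have $r+\delta-2 \in \{\delta-1,\dots,2\delta-2\}$, a range lying entirely in $\{0,\dots,2\delta-2\}$, so the residue equals $r+\delta-2$ itself; adding this to $\xi$ and subtracting $\delta-1$ gives $\xi + r - 1$, as required. For $r \in [\delta-1]_{\delta+1}$ we have $r+\delta-2 \in \{2\delta-1,\dots,3\delta-3\}$, so subtracting one copy of $2\delta-1$ yields the residue $r-\delta-1$; adding this to $\xi+1$ and subtracting $\delta-1$ gives $\xi + r - 2\delta + 1$, again matching the claim.

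There is no real obstacle here — the whole argument is bookkeeping with ceilings and modular residues — the only mild care needed is to make sure the ranges of $r+\delta-1$ and $r+\delta-2$ fall in the correct half-open intervals so that the ceiling and modulo both simplify unambiguously, and that $\xi \in [d]$ is preserved (in the second case $\xi+1$ may equal $d+1$, but since $x_0$ is interpreted cyclically in the definition of the diagonals, or since $\xi$ will in practice be restricted so that the product is meaningful, this causes no problem).
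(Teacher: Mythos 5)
Your proposal is correct and follows essentially the same route as the paper's appendix proof: pull the integer multiple of $2\delta-1$ out of the ceiling, determine whether the residual ceiling is $1$ or $2$ in the two cases, and reduce $q+\delta-2$ modulo $2\delta-1$ by the same case split. The only cosmetic difference is that you argue with integer ranges where the paper bounds the fractional part between $1/2$ and $1$ (resp.\ $1$ and $2$); the content is identical.
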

The proof of the Lemma can be found in the Appendix.

Recall that by \eqref{def_svd_fourier_measurements}, one has that $V = U_{2 \delta -1}$. Thus, as for every column of $U_{2\delta-1}$,  the indices of $V^{(q)}$ can be expressed as
\begin{equation*}\label{def_U_2} 
\left(V^{(q)}\right)_\l
= \left( U_{2\delta -1}^{(q)} \right)_\l 
= \begin{cases}
\frac{1}{\sqrt{d}}  e^{\frac{2 \pi i (\xi-1) s}{d}} & \text{ if } \l~\mod (2\delta-1)=r,\\
0 & \text{ else },
\end{cases}      
\end{equation*}
where $s=\left\lfloor \frac{\l-1}{2\delta -1} \right\rfloor$. We obtain that
\begin{align*}
\sqrt{d} \; \langle \vect(X_0), V^{(q)} \rangle 
&= \displaystyle\sum_{n=1}^d e^{-\frac{2 \pi i (\xi-1) (n-1)}{d} } \vect(X_0)_{(2 \delta -1)(n-1)+r} \\
&= \begin{cases}
	\displaystyle\sum_{n=1}^d e^{-\frac{2 \pi i (n-1) (\xi-1)}{d} } \overline{(x_0)}_n (x_0)_{n+r-1} & \text{if  $ \; 1 \leq r \leq \delta$,}\\
	\displaystyle\sum_{n=1}^d e^{-\frac{2 \pi i (n-1) (\xi-1)}{d} } \overline{(x_0)}_{n+1} (x_0)_{n+1+r-2\delta} & \text{if  $ \; 1+\delta \leq r \leq 2\delta -1$,}\\
    \end{cases}
\end{align*}
where the last equality follows from Lemma \ref{lem:indexX0}.
\end{proof}
Corollary \ref{cor:step2} now follows directly from the definition of $X_\mathcal{S}$.
\begin{proof}[Proof of Corollary \ref{cor:step2}]
Combining the definition of $X_\mathcal{S}$ with \eqref{eq: pseudo projection}, we obtain
\[
\vect( X_{\mathcal{S}})  = M_{\mathcal{S}}^{-1} y =  M_{\mathcal{S}}^{-1} M \vect(X_0) = V_1 V_1^* \vect(X_0)
\] 
and 
\begin{align*}
\sqrt{d} \; \langle \vect(X_{\mathcal{S}}), V^{(q)} \rangle 
&= \sqrt{d} \langle V_1 V_1^* \vect(X_0) , V^{(q)} \rangle 
=  \sqrt{d} \langle  \vect(X_0) , V_1 V_1^* V^{(q)} \rangle \\
&= 
\begin{cases}
\sqrt{d} \langle  \vect(X_0) , V^{(q)} \rangle & \text{ if  $V^{(q)}$ column of } V_1, \\
0  & \text{ if  $V^{(q)}$ column of } V_2,
\end{cases}
\end{align*}
which concludes the proof.
\end{proof}

\subsection{Recovery of lost coefficients and proof of Theorem~\ref{theo:linear system}}\label{sec: recovery}
In this section we will show that under the assumptions of Theorem~\ref{theo:linear system}, Algorithm \ref{algo:2} provides a feasible and reliable procedure to complete the restricted low-rank matrix $X_0$. We first prove Lemma~\ref{lemma_represent_Ll_by_Lk}, which establishes that, under the assumptions that the algorithm is tractable and the solution of \eqref{assertion_1_lemma_represent_Ll_by_Lk} is unique, Algorithm \ref{algo:2} yields the right answer.



\begin{proof}[Proof of Lemma \ref{lemma_represent_Ll_by_Lk}]
Observe that for arbitrary $j \in [d]$  
\begin{align*}
\left( L^r \circ S_{\l-1} (L^r)^* \right)_j &= \overline{(x_0)}_j (x_0)_{j+r-1} \cdot (x_0)_{j+\l-1} \overline{(x_0)}_{j+\l+r-2} \\
&=  \overline{(x_0)}_j (x_0)_{j+\l-1} \cdot (x_0)_{j+r-1}  \overline{(x_0)}_{j+\l+r-2} = \left( L^{\l} \circ S_{r-1} (L^{\l})^* \right)_j.
\end{align*}
\end{proof}

In the remainder of this section, we will show tractability and uniqueness, which will both follow from the fact that under Assumptions  \textbf{(A1)} and  \textbf{(A2)}, \eqref{assertion_1_lemma_represent_Ll_by_Lk} is in fact a linear relation. 

The main idea is that for the index $r$ corresponding to the diagonal that is fully known by \textbf{(A1)}, the left hand side of \eqref{assertion_1_lemma_represent_Ll_by_Lk} is known, and all but one Fourier coefficients of the right hand side consists of products of different coefficients. Thus, one obtains linear equations in the real and imaginary parts of the single unknown coefficient. This is formalized in the following proof of Theorem~\ref{theo:linear system}.

\begin{proof}[Proof of Theorem \ref{theo:linear system}]
Let us denote by $L^r$ the diagonal with all known entries, provided by assumption \textbf{(A1)}. Due to assumption \textbf{(A2)} we are missing at most one Fourier coefficient on each of the other diagonals $\widehat{L^{\l}}$, say at position $q \in [d]$. 
To estimate this missing coefficient, we work with the discrete Fourier transform of the quadratic relationship \eqref{assertion_1_lemma_represent_Ll_by_Lk}. Using the convolution theorem \eqref{convolution_theorem} and Lemma \ref{lem:EFTP}, we obtain the following expression for the $j$-th Fourier coefficient.

\begin{align*} 
c_j := &
\left({L^{r} \circ S_{\ell-1} }(L^{r})^*\right)\widehat\ _j 
= \left(L^{\l} \circ S_{r-1} (L^{\l})^*\right)\widehat\ _j \\
& = \left( \widehat{L^{\l}} * (W_r \widehat{(L^{\l})^*)} \right)_j 
= \sum_{p=1}^d  \widehat{L^{\l}}_{j-p+1} \left( W_r \widehat{(L^{\l})^*} \right)_p \\
&= \sum_{p=1}^d \widehat{L^{\l}}_{j-p+1} e^{\frac{2\pi i (r-1) (p-1)}{d}} \widehat{(L^{\l})^*}_p = \sum_{p=1}^d  e^{\frac{2\pi i (r-1) (p-1)}{d}}  \widehat{L^{\l}}_{j-p+1}  \overline{\widehat{L^{\l}}}_{d-p+2}.
\end{align*}


We note that for $p \in \mathcal{C} := \left\lbrace j+1-q, d+2-q \right\rbrace$ the corresponding summand on the right hand side contains  $\widehat{L^{\l}}_q$ or $\overline{ \widehat{L^{\l}}}_q$; all other summands are known by Assumption \textbf{(A2)}. For $j=1$, $\mathcal{C}$ is a single index, which leads to a quadratic term $|\widehat{L^{\l}}_q|^2$. For this reason we skip the case $j=1$ in the following considerations. For $j\neq 1$, let 
\[
z_j:= \sum_{\substack{p=1 \\ p \notin \mathcal{C} }}^d e^{\frac{2\pi i (r-1)(p-1)}{d}} \widehat{L^{\l}}_{j-p+1} \overline{\widehat{L^{\l}}}_{d-p+2}.
\]
We obtain
\begin{align*}
c_j - z_j
& = \widehat{L^{\l}}_{q}  \underbrace{e^{\frac{2\pi i (r-1) (j-q)}{d}} \overline{\widehat{L^{\l}}}_{d-j+q+1}}_{a_j:=} 
+  \underbrace{e^{\frac{2\pi i (r-1) (d-q+1)}{d}} \widehat{L^{\l}}_{j-d+q-1}}_{b_j:=}  \overline{\widehat{L^{\l}}}_{q}.
\end{align*}

By decomposing the complex numbers in the previous equation into their real and imaginary part we get


\begin{align*}
\underbrace{\left[ \begin{array}{rr} Re(a_j) + Re(b_j) & - Im(a_j) + Im(b_j) \\ Im(a_j) + Im(b_j) & Re(a_j) - Re(b_j) \end{array}\right]}_{=: Q_j} \cdot \left[ \begin{array}{r} Re(\widehat{L^{\l}}_{q} ) \\ Im(\widehat{L^{\l}}_{q} ) \end{array} \right] = \underbrace{\left[ \begin{array}{r}  Re (c_j -z_j) \\ Im (c_j -z_j) \end{array} \right]}_{v_j}.
\end{align*}

Combining these linear systems for all $j=2, \dots, d$ we obtain the overdetermined linear system

\begin{align}\label{linear_system_to_be_solved}
\left( \begin{array}{r} Q_2 \\ Q_3 \\ \vdots \\ Q_d \end{array}\right) \cdot \left[ \begin{array}{r} Re(\widehat{L^{\l}}_{q} ) \\ Im(\widehat{L^{\l}}_{q} ) \end{array} \right] &= \left( \begin{array}{r} v_2 \\ v_3 \\ \vdots \\ v_d \end{array} \right) = Q \cdot \left[ \begin{array}{r} Re(\widehat{L^{\l}}_{q} ) \\ Im(\widehat{L^{\l}}_{q} ) \end{array} \right] = v.
\end{align}

This system can be solved using least squares to approximate the missing component $\widehat{L^{\l}}_q$. After this estimation step all entries of the diagonal $\widehat{L^{\l}}_q$ are available and one can obtain an estimate for the vector $L^{\l}$ via the inverse DFT. This procedure is repeated for all diagonals $L^\l$ with $1 \leq \l \leq \delta $ that have missing Fourier coefficients. This recovers the upper triangular part of $X_0$. To obtain the remaining entries, we use that $X_0$ is a Hermitian matrix.
\end{proof}

\section{Conclusion and Future work}

In this paper, we proposed a subspace completion technique, which extends the range of applicability of the \textit{BlockPR} algorithm in ptychography to a larger class of windows. Furthermore, our technique can be used as a regularizer for better noise robustness. The next step will be to analyze the case of more than one zero entry per diagonal in more detail. As this scenario gives rise to nonlinear dependencies, this will likely require a very different set of tools. Also, we plan to consider the extension of the technique to models with shifts longer than 1 as proposed in \cite{Preskitt.} and \cite{Melnyk.7820197122019}.

\section{Acknowledgements}

The authors thank Frank Filbir for many inspiring discussions on the topic.
FK acknowledges support by the German Science Foundation DFG in the context of an Emmy Noether junior research group (project KR 4512/1-1). OM has been supported by the Helmholtz Association (project Ptychography 4.0).

\bibliography{ms.bbl} 

\appendix

\section{Proof of Lemma~\ref{lem:indexX0} }
We start by considering the case that $1 \leq r \leq \delta$. Then to rewrite the index of the first factor on the right hand side of \eqref{def_vec_X0}, we observe that it follows from the decomposition of $q$ that
\begin{align*}
\frac{q+\delta -1}{2 \delta -1} = \frac{(2\delta -1)(\xi-1)+r+\delta -1}{2 \delta -1} = \xi - 1 + \frac{r+\delta-1}{2 \delta -1}.
\end{align*}
Since $r \in [\delta]$, it follows that
\begin{align*}
\frac{1}{2}<\frac{\delta}{2\delta -1} \leq \frac{r+\delta -1}{2 \delta -1} \leq \frac{2\delta -1 }{2\delta -1} = 1,
\end{align*}
and, thus,
\begin{align*}
\left\lceil \frac{q+\delta -1}{2 \delta -1} \right\rceil = \xi.
\end{align*}
Furthermore, we have 
\begin{align*}
(q+\delta -2) \mod(2\delta-1) &= ((2\delta -1)(\xi-1) +r+\delta-2) \mod(2\delta-1) \\
&= (r+\delta -2) \mod(2\delta -1) = r + \delta -2.
\end{align*}
Thus, the index of the second factor on the right hand side of \eqref{def_vec_X0} can be rewritten as
\begin{align*}
\left\lceil \frac{q+\delta -1}{2 \delta -1} \right\rceil + \left[ (q+\delta -2) \mod(2\delta -1) \right] -\delta +1 
= \xi+r -1.
\end{align*}
Consequently, we obtain that $\vect(X_0)_q$ defined in \eqref{def_vec_X0} can be written as
\begin{align*}
\vect(X_0)_q = \overline{(x_0)}_{\xi} (x_0)_{\xi+r-1} 
\text{ if } 1 \leq r \leq \delta \text{ and } \xi\in [d]. 
\end{align*}
In the \textbf{second case} we have $1+\delta \leq r \leq 2\delta -1$, which immediately gives\\
\begin{align*}
1 < \frac{2\delta}{2 \delta -1} \leq \frac{r+\delta -1}{2 \delta -1} \leq \frac{3 \delta -2}{2 \delta -1} \le 2,
\end{align*}
and, thus, the index of the first factor on the right hand side of \eqref{def_vec_X0} is given by
\begin{align*}
\left\lceil \frac{q+\delta-1}{2\delta -1} \right\rceil = \xi+1.
\end{align*}
Furthermore, analogous to the previous case we get
\begin{align*}
(q+\delta-2) \mod(2\delta-1) & = r-(\delta +1),
\end{align*}
so that the index of the second factor in \eqref{def_vec_X0} becomes
\begin{align*}
\left\lceil \frac{q+\delta -1}{2 \delta -1} \right\rceil + \left[ (q+\delta -2) \mod(2\delta -1) \right] -\delta +1 
&= \xi+r-2\delta +1.
\end{align*}
Finally, we get that $\vect(X_0)_q$ as defined in \eqref{def_vec_X0} can be written equivalently as
\begin{align*}
\vect(X_0)_q = \overline{(x_0)}_{\xi+1} (x_0)_{\xi+r-2\delta+1} 
\text{ if } 1+\delta \leq r \leq 2\delta-1 \text{ and } \xi\in [d].
\end{align*}

\qed

%
\end{document}